\newcommand{\A}{{\mathbb{A}}}
\newcommand{\C}{{\mathbb{C}}}
\newcommand{\G}{{\mathbb{G}}}
\newcommand{\I}{{\mathbb{I}}}
\newcommand{\Q}{{\mathbb{Q}}}
\newcommand{\R}{{\mathbb{R}}}
\newcommand{\Z}{{\mathbb{Z}}}
\newcommand{\Ccal}{{\mathcal{C}}}
\newcommand{\Ocal}{{\mathcal{O}}}
\newcommand{\red}{\textup{red}}
\DeclareMathOperator{\Cl}{Cl}
\DeclareMathOperator{\GL}{GL}
\DeclareMathOperator{\Hom}{Hom}
\DeclareMathOperator{\sheafhom}{\mathcal{H}\kern -.5pt \emph{om}}
\DeclareMathOperator{\Img}{Im}
\DeclareMathOperator{\Mat}{Mat}
\DeclareMathOperator{\pr}{pr}
\DeclareMathOperator{\PSL}{PSL}
\DeclareMathOperator{\SL}{SL}
\DeclareMathOperator{\SO}{SO}
\DeclareMathOperator{\Span}{Span}
\DeclareMathOperator{\tr}{tr}
\newcommand{\git}{\mathbin{
  \mathchoice{/\mkern-6mu/}
    {/\mkern-6mu/}
    {/\mkern-5mu/}
    {/\mkern-5mu/}}}
\newenvironment{thm}[1]
  {\innercustomthm}
  {\endinnercustomthm}
\theoremstyle{plain}
		\newtheorem{theorem}{Theorem}[section]
		\newtheorem{lemma}[theorem]{Lemma}
		\newtheorem{corollary}[theorem]{Corollary}
		\newtheorem{proposition}[theorem]{Proposition}
		\newtheorem*{corollary*}{Corollary}
		\newtheorem{fact}[theorem]{Fact}
\theoremstyle{definition}
		\newtheorem{definition}[theorem]{Definition}
		\newtheorem{def-prop}[theorem]{Definition-proposition}
\theoremstyle{remark}
		\newtheorem*{remark}{Remark}
\DeclareMathOperator{\Ort}{O}
\DeclareMathOperator{\Pin}{Pin}
\DeclareMathOperator{\Spin}{Spin}
\begin{document}
\title{Short words of infinite order}

\author[Junho Peter Whang]{Junho Peter Whang}

\date{\today}

\maketitle

\begin{abstract}
Given an infinite linear group with a finite set of generators, we show that the shortest word length of an element of infinite order has an upper bound that depends only on the number of generators and the degree. This provides a quantification of the Burnside problem for linear groups. In degree two, an explicit bound is computed using an exceptional connection to reflection groups.
\end{abstract}

\setcounter{tocdepth}{1}
\tableofcontents

\section{Introduction}
\label{sect:1}
\subsection{Main results}
The first main result of this paper is the following.

\begin{theorem}
\label{word}
    Given integers $r,n\geq1$, there exists an integer $\ell=\ell(r,n)\geq0$ such that, for any finite set $S$ of $r$ matrices in $\GL_n(\C)$ generating a group $G$ of infinite order, there is an element of infinite order in $G$ with word length $\leq\ell$ in $S$.
\end{theorem}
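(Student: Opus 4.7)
The plan is to argue by contradiction via a compactness/ultraproduct construction combined with the Burnside--Schur theorem, which asserts that a finitely generated torsion subgroup of $\GL_n$ over any field is finite. Suppose the theorem fails; then for each $\ell \geq 1$ there exists a tuple $A^{(\ell)} \in \GL_n(\C)^r$ with $G_\ell := \langle A^{(\ell)}\rangle$ infinite, yet every word in $A^{(\ell)}$ of length $\leq \ell$ of finite order. I fix a non-principal ultrafilter $\mathcal{U}$ on $\N$, let $K := \C^\N/\mathcal{U}$ be the resulting algebraically closed field of characteristic zero, and let $A^* \in \GL_n(K)^r$ be the ultraproduct tuple with $\Gamma^* := \langle A^*\rangle$.

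The contradiction should emerge from two opposing properties. First, for each standard $M$, the first-order statement ``there exist $M$ pairwise distinct words of length $\leq M$ in the generators'' holds in every $G_\ell$ (since any finitely generated group of order $\geq M$ has at least $M$ elements in its radius-$M$ ball), and by {\L}o\'s's theorem it transfers to $\Gamma^*$; hence $\Gamma^*$ is infinite. Second, for each fixed standard word $w$, the assumption makes $w(A^{(\ell)})$ torsion for all $\ell \geq |w|$, so if one can upgrade this to show $w(A^*)$ is torsion in the standard sense, then $\Gamma^*$ is finitely generated and torsion, hence finite by Burnside--Schur --- a contradiction.

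The central difficulty is this upgrade step. The orders $N_\ell := \ord(w(A^{(\ell)}))$ may grow without bound in $\ell$; if so, $(w(A^*))^N = I$ holds only for nonstandard $N$, and $w(A^*)$ has infinite order in $\GL_n(K)$. To rule this out I would pass to the Zariski closures $H_\ell := \overline{G_\ell}^{\Zar}$, a family of closed algebraic subgroups of $\GL_n$. Using a Noetherian reduction combined with Jordan's theorem (which bounds the index of an abelian subgroup in any finite subgroup of $\GL_n(\C)$), one should restrict to a $\mathcal{U}$-large subset on which the $H_\ell$ share a common structural type $H$, and then derive a contradiction from the positive-dimensionality of $H$ --- which would entail the existence of unipotent or generic semisimple elements of infinite order in $H$, exhibited as short words in $A^{(\ell)}$ by Zariski density of $G_\ell$ in $H_\ell$ together with uniform quantitative control.

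The main obstacle is precisely this bridge from ``non-standard torsion in the ultraproduct'' to ``standard-order torsion'', with word-length control depending only on $r$ and $n$. An alternative that avoids the ultraproduct machinery altogether is to apply Breuillard's quantitative Tits alternative directly: outside the virtually solvable case one obtains two short words generating a non-abelian free subgroup (yielding short words of infinite order), while an infinite virtually solvable $\langle A\rangle$ can be handled via Maltsev's triangularization theorem and a direct analysis of the diagonal and unipotent parts of the resulting triangular representation. Either approach ultimately rests on quantitative structural information about finite-order elements and finite subgroups of $\GL_n(\C)$.
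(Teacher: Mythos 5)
Your primary (ultraproduct) route has a genuine gap, and it sits exactly where you locate it yourself: the passage from ``$w(A^{(\ell)})$ is torsion for all $\ell\geq|w|$'' to ``$w(A^*)$ is torsion of standard order.'' Nothing in your sketch closes it. The proposed fix --- pass to Zariski closures $H_\ell$, stabilize their type along the ultrafilter, and ``derive a contradiction from positive-dimensionality of $H$ \dots\ by Zariski density together with uniform quantitative control'' --- is a restatement of the theorem, not an argument: torsion elements are Zariski dense in a positive-dimensional torus (and in many reductive groups), so density of $G_\ell$ in $H_\ell$ gives no control whatsoever on whether the \emph{short} words avoid the torsion locus, and that is precisely the phenomenon (roots of unity of unbounded order appearing as eigenvalues of short words) that has to be excluded. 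This is the point at which the paper injects its key arithmetic input: it packages the eigenvalues of all words of length $\leq\ell$ as torsion points on a subvariety $Y_\ell$ of a torus $(\G_m^n)^{L(\ell)}$ (using Procesi's theorem to realize the character variety inside $(\A^n)^{L(\ell)}$ with a closed immersion, hence finite fibers), applies Laurent's theorem on Lang's $\G_m$ conjecture to confine those torsion points to finitely many torsion cosets, and runs a Noetherian stabilization of the images $Z'_\ell$ to extract a single length bound $t$ beyond which quasiunipotency propagates to \emph{all} of $F_r$; finiteness then follows from the Schur--Jordan--Bass reduction (Proposition \ref{burn}). Some substitute for Laurent's theorem (or an equidistribution/height input of comparable strength) is needed; the ultraproduct formalism by itself cannot manufacture it.

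Your fallback route is a genuinely different and in principle viable strategy, but as written it is a one-sentence sketch with two unaddressed points. First, the Breuillard--Gelander result quoted in the paper has a constant $m(G)$ depending on the group, and here the group varies with $S$; you would need Breuillard's uniform Tits alternative, where the radius depends only on the dimension $n$, to get a bound $\ell(r,n)$. Second, the infinite virtually solvable case is not automatic: one needs a solvable (triangularizable) subgroup of index bounded in terms of $n$ alone, bounded-length Schreier generators for its preimage in $F_r$, and then the observation that commutators of triangular matrices are unipotent, so torsion commutators are trivial, forcing the finite-index subgroup to be abelian and generated by commuting torsion elements, hence finite --- essentially the commutator argument of the paper's Proposition \ref{burn}. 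Carried out, this would give a proof avoiding Laurent and Procesi entirely (at the cost of invoking the uniform Tits alternative), and with a bound independent of $r$ in the non-solvable case; but in its current form it is an outline, not a proof.
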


Here, the word length of an element $g\in G$ in $S$ is the minimum integer $k\geq0$ such that $g=s_1\cdots s_k$ for some $s_1,\dots,s_k\in S\cup S^{-1}$. The nontrivial content of Theorem \ref{word} lies in the case where $S$ consists only of torsion elements. We point out a loose resemblance to the systolic inequality in Riemannian geometry: by work of Gromov \cite{gromov}, the length of the shortest noncontractible loop in an essential closed Riemannain manifold has an upper bound that depends only on the dimension and the volume of the manifold. Theorem \ref{word} is inspired by the classical Burnside problem for linear groups. Recall that, by Schur's theorem \cite{schur}, a complex linear representation of a finitely generated group $F$ has finite image if and only if the image of each element of $F$ is a torsion element. A natural question is whether torsionness of image for only a small part of $F$ is enough to ensure that a given representation is finite. This leads us to the following definition.

\begin{definition}
\label{burnside}
    Let $F$ be a group, and let $\Ccal$ be a class of group homomorphisms with domain $F$. Let $L\subseteq F$ be a subset. We say that $L$ is a \emph{Burnside set} for $\Ccal$ if the following are equivalent for every $\rho\in\Ccal$:
    \begin{enumerate}
        \item $\rho(\gamma)$ is torsion for all $\gamma\in L$.
        \item $\rho$ has finite image.
    \end{enumerate}
\end{definition}

A motivating example is where $F=\pi_1(\Sigma)$ is the fundamental group of a surface $\Sigma$ of genus $g$ with $n$ punctures, and $L$ is the set of its simple loops. If $3g+n-3>0$, the simple loops form a sparse yet infinite collection of elements in the fundamental group. Patel-Shankar-Whang \cite{psw} showed that, for $g>0$, the simple loops form a Burnside set for the class of semisimple degree $2$ representations of $F$ over $\C$. This was used in the proof of the $p$-curvature conjecture in rank $2$ for generic curves \emph{loc.,cit.}, and also in the classification of $\SL_2(\C)$-local systems with finite mapping class group orbits by Biswas-Gupta-Mj-Whang \cite{bgmw}. (On the other hand, Koberda-Santharoubane \cite{ks} showed that the set of simple loops cannot be a Burnside set for \emph{all} semisimple representations of a given surface group over $\C$.) The following is a reformulation of Theorem \ref{word}.

\begin{theorem}
\label{mainthm}
For any finitely generated group $F$ and integer $n\geq1$, there exists a finite Burnside set for $\Hom(F,\GL_n(\C))$.
\end{theorem}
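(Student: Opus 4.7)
The plan is to obtain Theorem \ref{mainthm} as a direct reformulation of Theorem \ref{word}, with the Burnside set taken to be a ball in the word metric on $F$.

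Fix a finite generating set $\{f_1,\dots,f_r\}$ of $F$, and set $\ell=\max_{1\leq r'\leq r}\ell(r',n)$, where $\ell(\cdot,n)$ is the function supplied by Theorem \ref{word}. Let $L\subseteq F$ be the finite set of elements that can be expressed as a word of length $\leq\ell$ in $\{f_1^{\pm 1},\dots,f_r^{\pm 1}\}$. I claim that $L$ is a Burnside set for $\Hom(F,\GL_n(\C))$.

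The implication (2)$\Rightarrow$(1) of Definition \ref{burnside} is trivial since $L\subseteq F$. For the converse, I argue by contrapositive: assume $\rho:F\to\GL_n(\C)$ has infinite image. The set $S=\{\rho(f_1),\dots,\rho(f_r)\}$ generates the infinite group $\rho(F)$ and has cardinality $r'\leq r$, so Theorem \ref{word} produces an element $g\in\rho(F)$ of infinite order with word length $\leq\ell(r',n)\leq\ell$ in $S$. Lifting a short expression $g=\rho(f_{i_1})^{\epsilon_1}\cdots\rho(f_{i_k})^{\epsilon_k}$ along $\rho$ to $\gamma=f_{i_1}^{\epsilon_1}\cdots f_{i_k}^{\epsilon_k}\in L$ yields $\rho(\gamma)=g$ of infinite order, contradicting (1).

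The only substantive bookkeeping is that $\rho$ may collapse generators, so $|S|$ can be strictly less than $r$; this is why I enlarge $\ell$ to the maximum over $r'\leq r$ rather than simply taking $\ell(r,n)$. (Equivalently, one could allow $S$ in Theorem \ref{word} to be a multiset of size $r$, or verify monotonicity of $\ell(\cdot,n)$.) There is no deeper obstacle: all the genuine content has been absorbed into Theorem \ref{word}, and Theorem \ref{mainthm} is essentially its group-theoretic repackaging in the language of Burnside sets.
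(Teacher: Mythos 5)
Your argument is circular in the context of this paper, and as a result it contains none of the actual mathematical content needed to establish the statement. The paper presents Theorem \ref{mainthm} as a \emph{reformulation} of Theorem \ref{word}: the two statements are indeed equivalent (your deduction \ref{word}$\Rightarrow$\ref{mainthm}, including the bookkeeping about $|S|<r$, is the easy direction of that equivalence and is fine as far as it goes), but the paper never proves Theorem \ref{word} independently. The only proof of Theorem \ref{word} in the paper is \emph{via} Theorem \ref{mainthm}, whose proof in Section \ref{sect:2} is where all the work happens. So by ``absorbing all the genuine content into Theorem \ref{word}'' you have assumed precisely the statement you were asked to prove, merely translated from the language of Burnside sets into the language of short words of infinite order.

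What is missing is the substance of the paper's Section \ref{sect:2} argument. There, one first reduces to free groups $F_r$ and to $\SL_n$, and proves Proposition \ref{burn} (using Schur's theorem, Jordan's theorem, and Bass's boundedness criterion) to show that finiteness of the image of $\rho$ is equivalent to: finitely many explicit elements $U_{r,n}$ having torsion image, together with quasiunipotency of $\rho(\gamma)$ for \emph{all} $\gamma\in F_r$. The latter, a priori infinite, family of conditions is then controlled by embedding the character variety $X(F_r,\SL_n)$ into affine spaces of characteristic-polynomial coordinates indexed by word-length balls $L(\ell)$ (Procesi's theorem guarantees a closed immersion for $\ell\geq\ell_0$), pulling back to subvarieties $Y_\ell$ of the tori $(\G_m^n)^{L(\ell)}$, and applying Laurent's theorem (Lang's $\G_m$ conjecture) together with a stabilization argument for the descending chain of images $Z'_\ell$ of torsion loci, to produce a finite threshold $t$ such that torsion of $\rho(\gamma)$ for all $\gamma\in L(t)$ forces quasiunipotency everywhere. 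None of this --- the reduction via Proposition \ref{burn}, the use of Procesi's finite generation of trace invariants, or the Laurent/Noetherianity step --- appears in your proposal, so the proof has a genuine gap: it would only be valid if Theorem \ref{word} had an independent proof, which this paper does not supply.
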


We prove Theorem \ref{mainthm} by combining Schur's theorem with Laurent's solution \cite{laurent} of Lang's $\G_m$ conjecture (See also \cite{s,sa})
and Procesi's work \cite{procesi} on invariant theory of matrices. (We mention in passing that Lang's $\G_m$ conjecture was also used in \cite{crrz} to show that anisotropic linear groups that are boundedly generated must be virtually solvable.) Our proof is nonconstructive, and it raises the interesting problem of explicitly constructing finite Burnside sets for $\Hom(F,\GL_n(\C))$. For $n=1$ or more generally for the class of abelian representations of $F$, it is trivial that any finite generating set of $F$ provides a Burnside set. Our second result is a solution to this problem in the simplest nontrivial case, where $n=2$.

\begin{theorem}
\label{mainthm2}
Let $F$ be a group generated by a finite set $S$. The set of elements of word length $\leq3|S|$ in $S$ is a Burnside set for $\Hom(F,\GL_2(\C))$.
\end{theorem}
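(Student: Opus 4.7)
The plan is to establish the bound $3|S|$ in three stages: first, reduce to representations into $\SL_2(\C)$; second, dispose of the reducible case using length-$4$ commutators; third, handle the irreducible case via the trace character and classical Fricke--Klein--Vogt identities. For the reduction, observe that $\det\circ\rho : F \to \C^\times$ is an abelian character sending each generator $s\in S$ to a root of unity, and so has finite image; thus it suffices to show that the projective image $\bar\rho(F) \subseteq \PGL_2(\C)$ is finite.

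For the reducible case, conjugate so that $\rho$ is upper triangular with diagonal characters $\chi_1,\chi_2 : F\to\C^\times$, which are automatically finite since each $\chi_i(s)$ is a root of unity. For any pair of generators $s_i, s_j\in S$, the commutator $[\rho(s_i),\rho(s_j)]$ lies in the unipotent radical of the Borel and is therefore torsion if and only if it is trivial; since $[s_i,s_j]$ has word length $4\leq 3|S|$ whenever $|S|\geq 2$, the Burnside hypothesis forces $\rho(s_i)$ and $\rho(s_j)$ to commute. Hence $\rho(F)$ is abelian and generated by torsion, so finite. The case $|S|=1$ is trivial since the image is then cyclic.

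For the irreducible case, Procesi's invariant theory specialized to $n=2$, equivalently classical Fricke--Klein--Vogt, says that the character of $\rho$ is determined by the values $\tr\rho(w)$ for $w$ ranging over an explicit finite family of words of length $\leq 3$ in $S$, and in turn $\rho$ is determined up to conjugation by its character. Torsion traces form the set $T=\{2\cos(2\pi q) : q\in\Q\}\subseteq[-2,2]$, which by Kronecker's theorem consists exactly of the totally real algebraic integers whose Galois conjugates all lie in $[-2,2]$. Via the trace identity
\[
\tr\rho([s_i,s_j]) = x_i^2+x_j^2+x_{ij}^2 - x_ix_jx_{ij} - 2,
\]
with $x_i=\tr\rho(s_i)$ and $x_{ij}=\tr\rho(s_is_j)$, imposing $\tr\rho([s_i,s_j])\in T$ (a length-$4$ constraint, well within $3|S|$) rules out the Euclidean and hyperbolic two-generator triangle subgroups of $\PGL_2(\C)$ and forces $\langle\bar\rho(s_i),\bar\rho(s_j)\rangle$ into the spherical (finite) regime.

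The main obstacle will be propagating this pairwise finiteness to the entire image $\bar\rho(F)$, and this is where the bound $3|S|$ appears. One iterates the analysis using trace identities on longer words---triples $s_is_js_k$, commutators $[s_is_j,s_k]$, and so on---combined with the classification of finite subgroups of $\PGL_2(\C)$ (cyclic, dihedral, $A_4$, $S_4$, $A_5$) as spherical two-generator reflection groups. Each of the $|S|$ generators contributes up to three letters' worth of constraints---through single, pairwise, and triple trace identities---and a careful accounting over $S$ should produce the claimed linear bound.
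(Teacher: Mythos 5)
Your reduction to $\SL_2$/$\PSL_2$ via the determinant character and your treatment of the reducible case (commutators $[s_i,s_j]$ of length $4$, forcing unipotent torsion to be trivial, hence abelian and finite image) match the paper's argument and are fine. The gap is in the irreducible case, and it is not a small one: it is the entire content of the theorem. First, the step ``imposing $\tr\rho([s_i,s_j])\in T$ forces $\langle\bar\rho(s_i),\bar\rho(s_j)\rangle$ into the spherical (finite) regime'' conflates boundedness with finiteness. Real traces with $\tr\rho([s_i,s_j])<2$ give unitarizability of the pair, i.e.\ conjugacy into $\SU(2)$, but a pair of finite-order elements of $\SU(2)$ whose product (and even whose commutator) has finite order is not thereby finite --- whether finitely many such trace/torsion conditions force finiteness of even a two-generator group is exactly the kind of statement being proved, so it cannot be asserted. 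Second, and more seriously, the passage from pairwise (or triple) constraints to finiteness of all of $\bar\rho(F)$ is exactly where you write that ``a careful accounting over $S$ should produce the claimed linear bound''; no mechanism is given, and none of the standard trace identities supplies one. Pairwise finiteness does not imply global finiteness (this is the same phenomenon as Sylvester's criterion: positivity of the $2\times2$ and $3\times3$ principal minors of a Gram matrix does not give positive semidefiniteness --- one needs all principal minors), and correspondingly the paper's Burnside set for the semisimple locus imposes torsion on words indexed by \emph{all} subsets of the generators (exponentially many words, of length up to $3r$), not just pairs and triples.

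What is missing is the paper's actual engine: semisimple $\SL_2(\C)$-representations of $F_r$ are identified, via the exceptional isomorphism $A(r+1,4)\simeq X(F_r,\SL_2)$ of Fan--Whang, with reflective (degree $4$) representations of the universal Coxeter group $W_r$, and then Theorem \ref{refthm} is applied. That theorem is proved through Stokes matrices: the Coxeter identity expresses $\rho_s(\gamma_I)$ for each subset $I$ in terms of the submatrix $s_I$, torsion of all these elements forces (at every Galois embedding, via Kronecker's theorem and the generalized Sylvester criterion) the Gram matrix $s+s^T$ to be positive semidefinite, hence the semisimplified representation is bounded, hence quasiunipotent with semisimple elements, hence torsion, and Schur's theorem finishes. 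Your sketch cites Kronecker and Fricke--Klein--Vogt but has no substitute for this global positivity/boundedness argument, so the irreducible case remains unproved; in particular the claimed bound $3|S|$ (which in the paper arises from squaring the words $\delta_{i_1}\cdots\delta_{i_u}$ of the Coxeter group and rewriting them in the $\gamma_i$) is not derived.
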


In fact, we produce an explicit finite set of words in $S$ that forms a Burnside set for $\Hom(F,\GL_2(\C))$. To prove Theorem \ref{mainthm2}, we first introduce and study reflective representations, which are representations $F_r\to\GL_n(\C)$ that send the standard free generators of $F_r$ to orthogonal reflections (Definition \ref{reflective}). Theorem \ref{mainthm2} can be deduced from Theorem \ref{refthm} below, using an exceptional correspondance established by Fan-Whang \cite{fw} between free group representations of degree $2$ and reflective representations of degree $4$.

\begin{theorem}
\label{refthm}
    Let $F_r=\langle\gamma_1,\dots,\gamma_r\rangle$ be a free group of rank $r$. Then $$L_{r}=\{\gamma_{i_1}\dots \gamma_{i_u}:1\leq i_1<\dots <i_u\leq r\text{ and }1\leq u\leq r\}$$
    is a Burnside set for the class of all semisimple reflective representations of $F_r$.
\end{theorem}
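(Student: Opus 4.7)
The plan is to encode the $L_r$-torsion hypothesis as positive definiteness of a Gram matrix and conclude via Tits' finiteness criterion for linear reflection groups. Let $\rho \colon F_r \to \GL_n(\C)$ be a semisimple reflective representation, and write $R_i := \rho(\gamma_i)$ as an orthogonal reflection $R_{v_i}$ in a root vector $v_i$ with respect to a symmetric bilinear form $B$ preserved by $\rho$. Normalizing $B(v_i,v_i)=2$, the Gram matrix $C=(B(v_i,v_j))$ encodes the reflection data, and torsion of $\rho(\gamma_i\gamma_j)$ for each $i<j$ forces $C_{ij}=-2\cos(\pi/m_{ij})$ for integers $m_{ij}=\ord(R_iR_j)\geq 2$, making $C$ the Cartan-Coxeter matrix of a Coxeter system $M=(m_{ij})$.

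Next, I identify $\rho$ with the Tits geometric representation of the abstract Coxeter group $W(M)$. The map $\gamma_i\mapsto s_i$ factors $\rho$ as $F_r\twoheadrightarrow W(M)\twoheadrightarrow G:=\rho(F_r)$, and the induced action on $V:=\Span(v_i)$ is precisely the Tits geometric representation determined by $C$. Here I will verify that semisimplicity of $\rho$ implies nondegeneracy of $C$ (equivalently $V\cap V^\perp=0$): a nontrivial radical would be pointwise fixed by $G$ yet admit no $G$-invariant complement in $V$, so that suitable products of reflections become unipotent and obstruct semisimplicity. Granting this, Tits' faithfulness theorem for the geometric representation gives $W(M)\cong G$, so torsion in $G$ coincides with torsion in $W(M)$.

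Under this identification, for every nonempty $I=\{i_1<\dots<i_u\}\subseteq\{1,\dots,r\}$ the element $\rho(\gamma_{i_1}\cdots\gamma_{i_u})\in L_r$ corresponds to a Coxeter element of the parabolic Coxeter subgroup $W_I$. For an irreducible Coxeter group the Coxeter element has finite order precisely when the group is finite; in the reducible case this extends by decomposing over connected components of the Coxeter graph restricted to $I$, whose respective Coxeter elements commute. By Tits' finiteness criterion, finiteness of $W_I$ is equivalent to positive definiteness of the principal submatrix $C_I$. The $L_r$-torsion hypothesis therefore amounts to positive definiteness of every principal submatrix of $C$, so by Sylvester $C$ itself is positive definite; a final application of Tits' criterion yields $G\cong W(M)$ finite, as required.

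The main anticipated obstacle is the first step: making precise that semisimplicity of $\rho$ forces $C$ to be nondegenerate, so that Tits' faithfulness theorem genuinely identifies $W(M)$ with the image $G$. This is where the specifics of Definition~\ref{reflective} matter, since one must cleanly separate the invariant form on $\C^n$ from its restriction to the root span $V$. Once this bridge is in place, the remainder of the argument is a direct synthesis of the Tits faithfulness and finiteness theorems, the classical order-of-Coxeter-element characterization of finite Coxeter groups, and Sylvester's criterion for positive definiteness.
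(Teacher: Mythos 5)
There is a genuine gap, and it sits exactly where your argument leans on Tits' theory. A semisimple reflective representation is \emph{not} in general the standard geometric representation of the Coxeter group $W(M)$ you build from it: torsion of $\rho(\gamma_i\gamma_j)$ only forces $2(u_i,u_j)=\pm 2\cos(k_{ij}\pi/m_{ij})$ for some $k_{ij}$ coprime to $m_{ij}$, not $-2\cos(\pi/m_{ij})$. So $C$ is typically a Galois-twisted cosine matrix, and neither Tits' faithfulness theorem (which would give $W(M)\cong G$ and let you transfer torsion from the image back to $W(M)$) nor Tits' finiteness criterion (which concerns positive definiteness of the \emph{standard} cosine matrix, not of $C$) applies. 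Concretely, take three unit vectors in $\R^3$ pairwise at angle $2\pi/5$: each $R_iR_j$ has order $5$ and the Gram matrix is positive definite, while the Coxeter group with all $m_{ij}=5$ is infinite; so the map $W(M)\to G$ is not injective and positive definiteness of $C_I$ is not equivalent to finiteness of $W_I$. Two further steps are also shaky: ``all proper principal submatrices positive definite $\Rightarrow C$ positive definite'' is false as a matrix statement (Sylvester needs the full determinant, or the semidefinite version with all principal minors $\geq 0$, which is what the paper uses); and ``semisimplicity of $\rho$ forces $C$ nondegenerate'' is not established — semisimple representations can contain non-semisimple elements, and the paper deliberately avoids this claim by passing to the radical quotient $W^s$ and to the semisimplification of $\rho_s$.

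Even if you repair the positivity part, your endgame is missing the arithmetic ingredient that is the actual crux. Positive (semi)definiteness of the Gram matrix only places the image in a compact orthogonal group, and a compact closure does not imply a finite image (irrational rotations). The paper's proof gets finiteness by showing, via the Coxeter identity for the partial products $\gamma_I$, that the off-diagonal entries are real algebraic integers; that for \emph{every} archimedean or nonarchimedean place the (semisimplified) image is bounded; hence all eigenvalues are algebraic integers with all conjugates of absolute value $1$, so roots of unity by Kronecker; and then Schur's theorem on finitely generated torsion linear groups finishes. Some form of this Kronecker--Schur (or an equivalent integrality/Galois) argument is unavoidable and is absent from your proposal.
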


Note that the set $L_r$ defined above is independent of the degrees of the reflective representations. Theorem \ref{refthm} is closely related to the fact that a Coxeter group is finite if and only if it admits a positive definite cosine matrix. The tools involved in our proof are classical: aside from Schur's theorem, we make use of the Coxeter identity, Kronecker's theorem on roots of unity, Sylvester's criterion on definiteness of matrices, and a Galois trick employed earlier in \cite{psw}.

Theorem \ref{word} is related to the following result of Breuillard-Gelander \cite{bg} on the Tits alternative: if $G$ is a finitely generated linear group that is not virtually solvable, then there is a constant $m=m(G)$ such that, for any finite generating set $S$ of $G$, there exist two elements $a,b\in G$ each with word length $\leq \ell$ in $S$ that are independent, i.e.,~generate a free nonabelian subgroup of rank $2$ in $G$. Note that our Theorem \ref{word} yields a weaker conclusion (existence of a single ``independent'' short word) from a weaker hypothesis ($G$ need only be finitely generated and infinite). In addition, if the degree is fixed, then our upper bound on word length depends on the order of the chosen generating set but is otherwise is independent of the group, while the bound in \cite{bg} is depends on the group but not on the generating set.

\subsection{Applications to surface group representations} A fruitful observation is that, for a natural presentation of the free group $F_r$ as the fundamental group of $\C-\{p_1,\dots,p_r\}$ for some distinct marked points $p_1,\dots,p_r$ in the complex plane, the elements of $L_r$ in Theorem \ref{refthm} correspond to simple loops. Thus, there exists a finite Burnside set consisting of simple loops for the class of semisimple reflective representations of $\pi_1(\C-\{p_1,\dots,p_r\})$. By passing to a double cover and utilizing the exceptional correspondance of \cite{fw}, we can show that if $\Sigma$ is a surface of positive genus with at most two punctures, then there is an explicitly determined Burnside set consisting of \emph{finitely many} simple loops for the set of all semisimple degree $2$ representations of $\pi_1(\Sigma)$ over $\C$ (Corollary \ref{surfcor}); this provides an effective strengthening of \cite[Theorem 1.2]{psw} for $\Sigma$. In addition, Theorem \ref{refthm} yields a new case of the $p$-curvature conjecture, when combined with Shankar's theorem \cite{shankar} that vector bundles with flat connection on a generic curve with almost all $p$-curvatures vanishing must have finite monodromy along simple loops.

\begin{theorem}\label{theorem2}
Let $C$ be the complement of a finite generic set of points in the affine line $\A^1$. The $p$-curvature conjecture holds for vector bundles with flat connection on $C$ whose monodromy representations are reflective.
\end{theorem}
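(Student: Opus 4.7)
The plan is to derive Theorem~\ref{theorem2} directly from Theorem~\ref{refthm} combined with Shankar's theorem \cite{shankar}. Let $(E,\nabla)$ be a flat vector bundle on $C = \A^1 \setminus \{p_1,\dots,p_r\}$ whose monodromy representation $\rho : \pi_1(C) \to \GL_n(\C)$ is reflective, and suppose that almost all of its $p$-curvatures vanish. The goal is to show that $\rho$ has finite image.

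First, I would fix the standard identification $\pi_1(C) \cong F_r = \langle \gamma_1,\dots,\gamma_r\rangle$ in which each $\gamma_i$ is represented by a small loop around $p_i$. Under this identification, each element $\gamma_{i_1}\cdots \gamma_{i_u}$ with $1 \leq i_1 < \cdots < i_u \leq r$ in the set $L_r$ from Theorem~\ref{refthm} is represented by an \emph{embedded} (simple) loop in $C$: namely a loop that starts at a fixed basepoint, encircles $\{p_{i_1},\dots,p_{i_u}\}$ once via a single simple closed curve (respecting the cyclic order inherited from $\A^1$), and returns. This is the same observation used elsewhere in the paper to link $L_r$ with simple loops on punctured surfaces.

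Next, by Shankar's theorem \cite{shankar}, the genericity of the configuration $\{p_1,\dots,p_r\} \subset \A^1$ together with the vanishing of almost all $p$-curvatures of $(E,\nabla)$ implies that $\rho(\gamma)$ has finite order for every simple loop $\gamma$ on $C$, and hence in particular for every $\gamma \in L_r$. Passing to the semisimplification $\rho^{ss}$, which remains reflective because orthogonal reflections are already semisimple, Theorem~\ref{refthm} then yields that $\rho^{ss}$ has finite image.

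The final step, which I anticipate as the main technical point, is to upgrade finiteness of $\img \rho^{ss}$ to finiteness of $\img \rho$ itself. For this I would appeal to a standard consequence of $p$-curvature vanishing on a set of primes of density one: the Zariski closure of $\img \rho$ (equivalently, the differential Galois group of $(E,\nabla)$) is reductive, so $\rho$ must be semisimple and therefore agrees with $\rho^{ss}$. Combined with the previous step this gives that $\rho$ has finite image, which is the conclusion of the $p$-curvature conjecture for $(E,\nabla)$.
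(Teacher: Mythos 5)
Your first three steps coincide with the route the paper intends (and only sketches in the introduction): identify the elements of $L_r$ with simple loops in $\C-\{p_1,\dots,p_r\}$, apply Shankar's theorem \cite{shankar} to get finite monodromy along those loops, and feed this into Theorem \ref{refthm} (or rather Corollary \ref{stokesfin}, which is what actually applies when $\rho$ is not assumed semisimple) to conclude that $\rho^{\mathrm{ss}}$ has finite image. One small caveat there: the assertion that $\rho^{\mathrm{ss}}$ ``remains reflective because orthogonal reflections are already semisimple'' is not a proof that the semisimplification is again reflective; the cleaner statement is that $\rho$ and the associated Stokes representation $\rho_s$ define the same point of the character variety, so Corollary \ref{stokesfin} gives finiteness of the semisimplification directly, without needing $\rho^{\mathrm{ss}}$ itself to be reflective.

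The genuine gap is in your final step. There is no ``standard consequence'' asserting that vanishing of almost all $p$-curvatures forces the differential Galois group (equivalently, the Zariski closure of the monodromy) to be reductive; such a statement is not a known theorem, and if it were, it would already dispose of substantial cases of the conjecture (for instance the unipotent-extension case, which historically required the transcendence/arithmetic-algebraization methods of the Chudnovskys, Bost, and Andr\'e). So you cannot conclude $\rho=\rho^{\mathrm{ss}}$ by citing it. The correct way to close the non-semisimple case is the one used in \cite{psw}: once $\rho^{\mathrm{ss}}$ is known to be finite, the identity component of the Zariski closure of $\operatorname{im}\rho$ is unipotent, hence solvable, and the solvable case of the Grothendieck--Katz conjecture (Bost, Andr\'e, building on the Chudnovskys) then upgrades vanishing $p$-curvatures to finiteness of $\rho$ itself; equivalently, one may pass to the finite \'etale cover trivializing $\rho^{\mathrm{ss}}$ and invoke the unipotent case there. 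With that substitution your argument is complete; you were right, however, to flag this upgrade as a real step, since the paper's own one-sentence derivation of Theorem \ref{theorem2} from Theorem \ref{refthm} and \cite{shankar} leaves it implicit.
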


The $p$-curvature conjecture of Grothendieck-Katz is a type of local-to-global principle for differential equations. It states that a system of linear differential equations on an algebraic variety over $\C$ admits a full set of algebraic solutions over $\C$ if and only if it does so modulo $p$ (or, equivalently, has vanishing $p$-curvature) for all but finitely many prime numbers $p$; see \cite{katz} for a reference.
 
Theorem \ref{theorem2} applies in particular to the following Fuchsian system of differential equations, encountered in two-dimensional topological field theory, whose monodromy representations define the so-called monodromy groups of Frobenius manifolds (See \cite[Section 5]{dubrovin}, in particular Equation (5.31b) and surrounding paragraphs). Fix a skew-symmetric matrix $A\in\mathfrak{so}(r)$, and let $p_1,\dots,p_r\in\C$ be distinct points. For $1\leq k\leq r$, let $E_k$ be the $r\times r$ matrix with unique nonzero entry $1$ in the $(k,k)$th place, and let $\mathbb I$ be the identity matrix. Consider the following Fuchsian system for the vector valued function $Y(z)$:
$$\frac{d}{dz}Y=\sum_{k=1}^r\frac{E_k(\frac{1}{2}\mathbb I-A)}{z-p_i}Y.$$
Assuming generic choices of the points $p_i$, Theorem \ref{theorem2} shows that the $p$-curvature conjecture holds for the system above.

\subsection{Organization of the paper} This paper is organized as follows. In Section \ref{sect:2}, we prove Theorem \ref{mainthm}. In Section \ref{sect:3}, we introduce the notion of Stokes matrices and their associated representations, proving the equivalent of Theorem \ref{refthm}. In Section \ref{sect:3}, we prove Theorems \ref{refthm} and \ref{mainthm2} using Stokes matrices and the exceptional correspondance established in \cite{fw}.

\subsection{Acknowledgments}
I thank Peter Sarnak and Alexander Lubotzky for enlightening discussions and comments.
This work was supported by the Samsung Science and Technology Foundation under Project Number SSTF-BA2201-03.




\section{Finite Burnside sets}\label{sect:2}

\subsection{Burnside-Schur theory}\label{sect:2.1}
Our goal in Section \ref{sect:2.1} is to show (Proposition \ref{burn}) that the finiteness of a representation $\rho:F_r\to\GL_n(\C)$ is essentially equivalent to the quasiunipotency of the elements in the image of $\rho$, modulo considerations of global semisimplicity. We first recall the statements of some relevant results, including Schur's theorem.

\begin{theorem}
    [Schur \cite{schur}]\label{schur}
    Let $\Gamma\leq\GL_n(\C)$ be a finitely generated group. If $\gamma$ has finite order for every $\gamma\in\Gamma$, then $\Gamma$ is finite.
\end{theorem}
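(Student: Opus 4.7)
The plan is to induct on $n$, with the base case $n=1$ being the standard fact that a finitely generated torsion subgroup of $\C^\times$ is finite. For the inductive step, I first reduce to the case where $\Gamma$ acts irreducibly on $V=\C^n$, and then combine Burnside's density theorem with a classical trace-boundedness argument.

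For the reduction: if $W\subsetneq V$ is a nonzero $\Gamma$-invariant subspace, then $\Gamma$ acts on $W$ and on $V/W$ through finitely generated torsion subgroups of $\GL(W)$ and $\GL(V/W)$, which are finite by the inductive hypothesis. The kernel of the diagonal map $\Gamma\to\Gamma|_W\times\Gamma|_{V/W}$ consists of elements acting as the identity on both $W$ and $V/W$; these are unipotent, and being simultaneously torsion must be trivial in characteristic zero. Hence $\Gamma$ injects into a finite group.

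Assuming irreducibility, Burnside's density theorem lets me choose $h_1,\dots,h_{n^2}\in\Gamma$ forming a $\C$-basis of $M_n(\C)$. By non-degeneracy of the trace pairing $(A,B)\mapsto\tr(AB)$, the map $\tau:\Gamma\to\C^{n^2}$ defined by $g\mapsto(\tr(gh_i))_i$ is injective, so it suffices to show its image is finite. For every $g\in\Gamma$ and each $i$, the element $gh_i\in\Gamma$ is torsion, so its eigenvalues are roots of unity and $\tr(gh_i)$ is a sum of $n$ roots of unity; in particular, it is an algebraic integer whose Galois conjugates, being again sums of $n$ roots of unity, all have absolute value at most $n$.

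The final step is to pin the traces down to a fixed number field. Let $R\subset\C$ be the finitely generated $\Z$-subalgebra generated by the entries of a finite generating set of $\Gamma$ together with those of its inverses; all traces lie in $R$. The fraction field of $R$ is a finitely generated field extension of $\Q$, and a standard fact is that the algebraic closure of $\Q$ inside such an extension is a number field $L$. The traces are thus algebraic integers in $L$ with uniformly bounded Galois conjugates, and a Northcott-style argument (bounding the coefficients of the minimal polynomial) shows that only finitely many such integers exist. I expect this arithmetic step to be the main subtlety: the entries of the generators may be transcendental, so bounds on heights of algebraic numbers are not directly available, and one must route through the finite generation of $\Gamma$ to extract the number field $L$ from the transcendental ring $R$.
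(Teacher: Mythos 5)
Your argument is correct, and it is essentially the classical proof of Schur's theorem; the paper itself does not prove this statement but quotes it with a citation to Schur's original article, so there is no internal proof to compare against. Your reduction to the irreducible case (finite images on $W$ and $V/W$ by induction, kernel unipotent and torsion, hence trivial in characteristic zero), the use of Burnside's density theorem together with nondegeneracy of the trace pairing to make $g\mapsto(\tr(gh_i))_i$ injective, and the arithmetic finiteness step are all sound. You are also right that the arithmetic step is where the content lies: boundedness of the conjugates of a sum of $n$ roots of unity alone does not suffice (roots of unity themselves form an infinite bounded family), and the key is that all traces lie in the finitely generated field $\Frac(R)$, whose algebraic closure of $\Q$ is a number field $L$, so the traces are algebraic integers of uniformly bounded degree with conjugates bounded by $n$, of which there are only finitely many by the usual bound on the coefficients of their minimal polynomials. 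This is precisely the standard route (as in Curtis--Reiner, the reference the paper uses for Jordan's theorem), so nothing further is needed; if you wanted to streamline, you could phrase the reducible case as passing to the semisimplification and handling the unipotent kernel once, but that is only a cosmetic difference from your block-by-block induction.
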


\begin{theorem}
    [Jordan's theorem, see \cite{cr}]\label{js}
    There exists an effective constant $N(n)$ such that, for any finite group $G\leq\GL_n(\C)$, there exists a normal abelian subgroup $H$ of index $\leq N(n)$ in $G$.
\end{theorem}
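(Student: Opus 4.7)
The plan is to prove Jordan's theorem via the classical geometric argument of Bieberbach, based on analyzing the local structure of finite subgroups near the identity of the unitary group $U(n)$. The main obstacle is the quantitative commutator estimate (Zassenhaus--Kazhdan--Margulis lemma) that produces a large abelian subgroup; the rest is a covering argument.

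First, I would reduce to the case $G \leq U(n)$. By averaging the standard Hermitian inner product on $\C^n$ over the finite group $G$, one obtains a $G$-invariant Hermitian form; choosing an orthonormal basis for this form conjugates $G$ into $U(n)$. Since abelianness, normality, and index are all preserved under conjugation in $\GL_n(\C)$, this reduction causes no loss.

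Next, I would fix a small open neighborhood $U = \{g \in U(n) : \|g - I\| < \epsilon\}$ of the identity, with $\epsilon = \epsilon(n) > 0$ chosen (by the Zassenhaus--Kazhdan--Margulis lemma) so that any subgroup of $U(n)$ generated by elements of $U$ is abelian. The input is the Baker--Campbell--Hausdorff commutator estimate $\|[x,y] - I\| \leq C \|x - I\| \cdot \|y - I\|$ for $x,y$ close to $I$; applying this iteratively to nested commutators produces elements arbitrarily close to $I$, which in the finite group $G$ must eventually equal $I$, forcing commutativity. Set $H = \langle G \cap U \rangle$; this $H$ is abelian.

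Then I would bound the index $[G : H]$ by a volume/covering argument. If $g_1, \ldots, g_t \in G$ are distinct coset representatives of $H$ in $G$, then no two differ by an element of $U$, since otherwise $g_i^{-1} g_j \in G \cap U \subseteq H$ would contradict $g_i H \neq g_j H$. Hence the $g_i$ lie in pairwise disjoint translates $g_i U$, and $t$ is bounded by the minimum number $M(n, \epsilon)$ of translates of $U$ needed to cover the compact group $U(n)$. This is an effective constant depending only on $n$.

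Finally, to produce a \emph{normal} abelian subgroup, I would pass to the normal core $N = \bigcap_{g \in G} g H g^{-1}$. Since $H$ itself is among the conjugates (taking $g = e$), we have $N \leq H$, so $N$ is abelian; and $N$ is normal in $G$ by construction. Interpreting $N$ as the kernel of the action of $G$ on the coset space $G/H$ shows $[G : N]$ divides $[G : H]!$, yielding the effective bound $N(n) := M(n, \epsilon(n))!$.
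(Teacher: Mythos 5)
The paper does not prove this statement at all: it is quoted as a classical result with a reference to Curtis--Reiner, so the only question is whether your outline is sound. Your strategy --- unitarize by averaging, show that the elements of the finite group near the identity generate an abelian subgroup, bound the index by a packing argument in the compact group $U(n)$, and force normality at the end --- is exactly the classical Bieberbach--Frobenius proof (essentially the one in the cited reference), so the route is the right one, and the reduction to $U(n)$, the coset-separation observation, and the normal-core step are all fine.

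However, the key lemma is both misstated and incompletely justified, and this is the actual heart of the theorem. As written, ``any subgroup of $U(n)$ generated by elements of $U$ is abelian'' is false: $U$ is an open neighborhood of the identity in the connected group $U(n)$, so the subgroup generated by $U$ is all of $U(n)$; finiteness (or at least discreteness) of the subgroup must enter the statement, not just the proof. More seriously, even for the finite group $G$, your justification --- iterate $\|[x,y]-I\|\leq C\,\|x-I\|\,\|y-I\|$ on nested commutators until the finite group forces some iterate to equal $I$, ``forcing commutativity'' --- does not force commutativity: it only shows that some iterated commutator $[x,[x,\dots,[x,y]\dots]]$ is trivial, which is perfectly compatible with $[x,y]\neq I$ (in a finite Heisenberg-type group all deeper commutators vanish while $[x,y]$ is a nontrivial central element). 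The standard repair is: take a non-commuting pair in $G\cap U$ with $\|y-I\|$ minimal; the estimate puts $c=[x,y]$ in $G\cap U$ and closer to $I$, so by minimality $x$ commutes with $c$; then $xyx^{-1}=cy$ and $xcx^{-1}=c$ give $x^k y x^{-k}=c^k y$, and since $c\neq I$ is unitary of finite order some power $c^k$ has an eigenvalue far from $1$, whence $\|c^k y - I\|\geq \|c^k-I\|-\|y-I\|$ is large, contradicting $\|x^k y x^{-k}-I\|=\|y-I\|<\epsilon$. (Alternatively, invoke the Zassenhaus lemma in its correct form --- discrete subgroups generated by elements of a Zassenhaus neighborhood lie in a connected nilpotent subgroup --- together with the fact that connected nilpotent subgroups of a compact group have abelian closure.) Two smaller points: $\epsilon$-separation of the coset representatives gives disjointness of translates of the $\epsilon/2$-ball, not of $U$ itself, so the index should be bounded by the covering number of $U(n)$ by $\epsilon/2$-balls; and since the operator-norm distance to $I$ is conjugation-invariant in $U(n)$, the set $G\cap U$ is invariant under conjugation, so $H=\langle G\cap U\rangle$ is already normal in $G$ --- your normal-core step is correct but costs an unnecessary factorial in the bound.
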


\begin{theorem}
    [Bass \cite{bass}]\label{bass}
    Let $G$ be a subgroup of $\GL_n(\C)$ that acts irreducibly on $\C^n$. If the set of traces of all elements in $G$ is a bounded subset of $\C$, then $G$ is bounded, i.e.,~conjugate to a subgroup of $U(n)$.
\end{theorem}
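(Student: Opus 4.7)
The plan is to reduce the statement to showing that $G$ is a bounded (hence relatively compact) subset of $\GL_n(\C)$, and then apply the classical averaging argument to produce a $G$-invariant positive definite Hermitian form on $\C^n$. The existence of such a form is equivalent to being conjugate to a subgroup of $U(n)$, so this reduction does all the work.

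For the boundedness step, I would invoke Burnside's density theorem: since $G$ acts irreducibly on $\C^n$, the $\C$-linear span of $G$ inside the matrix algebra $M_n(\C)$ is all of $M_n(\C)$. So I can pick $n^2$ elements $g_1,\dots,g_{n^2}\in G$ forming a $\C$-basis of $M_n(\C)$. Because the trace pairing $(A,B)\mapsto \tr(AB)$ is non-degenerate on $M_n(\C)$, the linear map
$$\Phi:M_n(\C)\to\C^{n^2},\qquad A\mapsto(\tr(Ag_1),\dots,\tr(Ag_{n^2}))$$
is an isomorphism. For any $g\in G$, each coordinate $\tr(gg_i)$ is the trace of the element $gg_i\in G$, and is therefore bounded in absolute value by hypothesis. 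Hence $\Phi(G)$, and consequently $G$ itself, is a bounded subset of $M_n(\C)$.

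Since $G=G^{-1}$, the inverse set is also bounded, and this is enough to force the closure $\overline{G}$ of $G$ in $M_n(\C)$ to lie inside $\GL_n(\C)$: for any sequence $g_k\in G$ converging to some $A\in M_n(\C)$, we may extract a subsequence with $g_k^{-1}\to B$, and then the identity $g_kg_k^{-1}=I$ gives $AB=I$. Thus $\overline{G}$ is a compact subgroup of $\GL_n(\C)$, and I may average any fixed Hermitian inner product $\langle\cdot,\cdot\rangle$ on $\C^n$ against the normalized Haar measure $\mu$ on $\overline{G}$ to produce a positive definite $\overline{G}$-invariant form
$$\langle v,w\rangle_G=\int_{\overline{G}}\langle hv,hw\rangle\,d\mu(h).$$
Diagonalizing $\langle\cdot,\cdot\rangle_G$ by a linear change of coordinates $P\in\GL_n(\C)$ exhibits $PGP^{-1}$ as a subgroup of $U(n)$, as desired.

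The only substantive input here is Burnside's density theorem, which converts the irreducibility hypothesis into the algebraic identity that $G$ spans $M_n(\C)$; the rest is forced by the non-degeneracy of the trace pairing together with the standard averaging construction, so I expect no further obstacles beyond citing or reproducing this classical density statement.
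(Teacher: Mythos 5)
Your proposal is correct. Note, however, that the paper does not prove this statement at all: it is quoted as an external ingredient with a citation to Bass, so there is no internal argument to compare against. Your argument is the standard self-contained proof of the complex case: Burnside's density theorem turns irreducibility into the statement that $G$ spans $M_n(\C)$, the non-degeneracy of the trace pairing then converts boundedness of traces on $G$ (applied to the products $gg_i$) into boundedness of $G$ itself, the subsequence trick shows the closure is a compact subgroup of $\GL_n(\C)$, and Weyl's unitarian averaging produces the invariant Hermitian form conjugating $G$ into $U(n)$. All of these steps are sound as written (the span of $G$ is a subalgebra, so Burnside applies; $AB=I$ in $M_n(\C)$ does force invertibility and $B\in\overline{G}$). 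The only caveat worth recording is that Bass's cited theorem is proved in a more general setting (fields with absolute values, integral representation type), whereas your argument establishes exactly the complex statement used in the paper --- which is all that is needed here.
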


Recall that an element $g\in\GL_n(\C)$ is said to be \emph{quasiunipotent} if every eigenvalue of $g$ is a root of unity. By combining the above ingredients, we deduce the following result which, in essence, allows us to reduce the problem of finiteness of a representation $\rho:F_r\to\GL_n(\C)$ to an infinite system of trigonometric Diophantine equations in infinitely many unknowns (eigenvalues of $\rho(\gamma)$ as $\gamma$ ranges over all elements of $F_r$). This will be used in our proof of Theorem \ref{mainthm}.

\begin{proposition}
\label{burn}
    There is an effectively determined finite set $U_{r,n}\subseteq F_r$ such that a representation $\rho:F_r\to\GL_n(\C)$ has finite image if and only if
    \begin{enumerate}
        \item $\rho(\gamma)$ has finite order for every $\gamma\in U_{r,n}$, and
        \item $\rho(\gamma)$ is quasiunipotent for every $\gamma\in F_r$.
    \end{enumerate}
\end{proposition}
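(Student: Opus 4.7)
The forward direction is immediate: finite image implies every element of the image has finite order, hence is quasiunipotent. For the converse, assume $(1)$ and $(2)$; my plan proceeds in two steps.

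Step one uses only $(2)$ to show that the semisimplification $\rho^{ss}$ has finite image. Since every $\rho(\gamma)$ is quasiunipotent, $|\tr \rho^{ss}(\gamma)| \leq n$ for every $\gamma \in F_r$. Decomposing $\rho^{ss}$ into irreducible summands and applying Bass's theorem (Theorem \ref{bass}) to each, I conjugate $\rho^{ss}(F_r)$ into a product of unitary groups inside $U(n)$. In $U(n)$ a quasiunipotent matrix is automatically of finite order (it is diagonalizable with roots-of-unity eigenvalues), so Schur's theorem (Theorem \ref{schur}) forces $\rho^{ss}(F_r)$ to be finite.

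Step two uses $(1)$ to promote this to finiteness of $\rho$ itself. Under $(2)$, condition $(1)$ is equivalent to matrix-semisimplicity of $\rho(\gamma)$ for each $\gamma \in U_{r,n}$: the Jordan decomposition of a finite-order matrix has trivial unipotent part, and conversely a quasiunipotent semisimple matrix has finite order. I therefore need to exhibit a finite $U_{r,n} \subseteq F_r$ such that matrix-semisimplicity of $\{\rho(\gamma)\}_{\gamma \in U_{r,n}}$ forces $\rho$ to be semisimple as a representation (whence $\rho = \rho^{ss}$, which is finite by Step one). In the representation variety $\GL_n(\C)^r$, let $\mathrm{NS}$ denote the locus of $r$-tuples defining a non-semisimple representation, and let $Z_\gamma$ denote the locus where $\rho(\gamma)$ is non-semisimple as a matrix; both are constructible. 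The key lemma, stated below, is that $\mathrm{NS} \subseteq \bigcup_{\gamma \in F_r} Z_\gamma$. Granting this, I construct $U_{r,n}$ by noetherian induction on $\dim \overline{\mathrm{NS}}$: the Zariski closure $\overline{\mathrm{NS}}$ has finitely many irreducible components $Y_i$; applying the key lemma at the generic point of each $Y_i$ yields a word $\gamma_i$ such that $Z_{\gamma_i} \cap Y_i$ is dense in $Y_i$; the closure of the residual $\overline{\mathrm{NS}} \setminus \bigcup_i Z_{\gamma_i}$ is a closed subset of strictly smaller dimension, to which the same procedure applies, terminating in finitely many stages.

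The principal obstacle is the key lemma: a non-semisimple representation of $F_r$ admits an image element that is not matrix-semisimple. I plan to prove this by passing to the Zariski closure $G := \overline{\rho(F_r)} \subseteq \GL_n(\C)$. Non-semisimplicity of $\rho$ is equivalent to non-semisimplicity of the standard $G^0$-representation on $\C^n$, which forces the unipotent radical $R_u(G^0)$ to be non-trivial and to act non-trivially on $\C^n$. Using that $\rho(F_r)$ is Zariski-dense in $G$, a suitable commutator construction inside $G$ should then produce an element of $\rho(F_r)$ whose unipotent Jordan part is non-trivial.
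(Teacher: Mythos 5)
Your Step one is correct and coincides with the paper's own argument: quasiunipotency bounds traces, Bass's theorem unitarizes each irreducible summand of $\rho^{ss}$, a unitary quasiunipotent matrix has finite order, and Schur's theorem then gives finiteness of $\rho^{ss}$. The problem is Step two: the key lemma it rests on is false. A non-semisimple representation of a free group need not send any word to a non-semisimple matrix. Concretely, let $\Gamma=\langle A,B\rangle\subseteq\SL_2(\C)$ be a purely loxodromic Schottky group, so $\Gamma$ is free of rank $2$ and every $\gamma\in\Gamma\setminus\{1\}$ is diagonalizable with eigenvalues $\lambda^{\pm1}$, $|\lambda|\neq1$; in particular $1$ is never an eigenvalue. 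For the standard action of $\Gamma$ on $\C^2$ one has $\dim Z^1(\Gamma,\C^2)=4$ and $\dim B^1(\Gamma,\C^2)=2$, so there is a crossed homomorphism $c:\Gamma\to\C^2$ that is not a coboundary. Set $\rho_c(\gamma)=\left[\begin{smallmatrix}\gamma & c(\gamma)\\ 0 & 1\end{smallmatrix}\right]\in\GL_3(\C)$. For $\gamma\neq1$ the two diagonal blocks have disjoint spectra, so $\rho_c(\gamma)$ is conjugate to $\mathrm{diag}(\gamma,1)$ and is a semisimple matrix; yet the invariant plane $\C^2\times\{0\}$ admits an invariant complement if and only if $c$ is a coboundary, so $\rho_c$ is not a semisimple representation. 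Hence the non-semisimple locus is not contained in $\bigcup_\gamma Z_\gamma$, your noetherian induction can never terminate (the whole family $\{\rho_c\}$ lies in it and meets no $Z_\gamma$), and the statement you actually need --- that matrix-semisimplicity on some finite set of words forces semisimplicity of $\rho$ --- fails for every set of words, finite or not. The same example defeats the closing argument via Zariski closures: here $\overline{\rho_c(\Gamma)}=\SL_2(\C)\ltimes\C^2$ has nontrivial unipotent radical acting nontrivially, and nevertheless the dense subgroup $\rho_c(\Gamma)$ consists entirely of semisimple elements, so no commutator construction inside the closure can produce an element with nontrivial unipotent Jordan part.

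The repaired version of Step two would have to keep hypothesis (2) in play --- if all $\rho(\gamma)$ are quasiunipotent and all $\rho(\gamma)$, $\gamma\in U_{r,n}$, are matrix-semisimple (hence of finite order), one wants finiteness --- but that is the proposition itself, and since quasiunipotency of every element is not a constructible condition on $\GL_n(\C)^r$, it cannot be fed into your constructible-set induction. The paper takes a different route in the reducible case and never tries to force $\rho$ to be semisimple: using Jordan's theorem it fixes in advance a finite set $T_{r,n}$ containing generators of the kernel of every homomorphism from $F_r$ to a group of order at most $N(n)$, and sets $U_{r,n}=T_{r,n}\cup\{[a,b]:a,b\in T_{r,n}\}$. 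Since $\rho^{ss}$ is finite, some $S_\rho\subseteq T_{r,n}$ generates a finite-index subgroup of $F_r$ whose image under $\rho^{ss}$ is abelian; then each $[\rho(a),\rho(b)]$, $a,b\in S_\rho$, is unipotent, so condition (1) forces it to be trivial, making $\rho|_{\langle S_\rho\rangle}$ abelian with finite-order generators, hence finite, hence $\rho$ finite. Some input of this kind, beyond matrix-semisimplicity of finitely many images, is unavoidable.
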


\begin{proof}
    By Theorem \ref{js}, there is an effective constant $N(n)\geq1$ such that any finite subgroup of $\GL_n(\C)$ has a normal abelian subgroup of index $\leq N(n)$. Let us choose a finite subset $T_{r,n}$ of $F_r$ such that, for every group homomorphism $\varphi:F_r\to G$ where $G$ is a finite group of order at most $N(n)$, there is a subset of $T_{r,n}$ that generates $\ker(\varphi)$. We claim that the set
    $$U_{r,n}=T_{r,n}\cup \{[a,b]:a,b\in T_{r,n}\}.$$
    satisfies the conclusions of the proposition. Indeed, let $\rho:F_r\to\GL_n(\C)$ be a representation satisfying conditions (1) and (2). If $\rho$ is irreducible, then $\rho$ is unitarizable by Theorem \ref{bass} and condition (2). It follows that every $\rho(\gamma)$ has finite order for $\gamma\in F_r$, showing that $\rho$ has finite image by Theorem \ref{schur}.
    
    It remains to treat the case where $\rho$ is reducible. The semisimplification $\rho^s$ of $\rho$ has finite image, by arguing as above. Now, by Theorem \ref{js} we can choose a subset $S_\rho\subseteq T_{r,n}$ such that its image under $\rho^s$ generates a normal abelian subgroup of index $\leq N(n)$ in $\rho^s(F_r)$. In particular, $S_\rho$ generates a finite index subgroup of $F_r$. We may assume that $\rho(F_r)$ consists of block upper triangular matrices (with each diagonal block corresponding to an irreducible summand of $\rho^s$), and moreover that $\rho(S_\rho)$ belongs to the group of upper triangular matrices. Given any $a,b\in S_\rho$, it follows that the commutator $[\rho(a),\rho(b)]$ is a unipotent matrix and hence has finite order if and only if it is the identity matrix. Our choice of $U_{r,n}$ and the hypothesis on $\rho$ implies therefore that the restriction of $\rho$ to $\langle S_\rho\rangle$ is abelian, and hence $\rho(\langle S_\rho\rangle)$ is finite since $\rho(\gamma)$ has finite order for every $\gamma\in S_\rho\subseteq T_{r,n}\subseteq U_{r,n}$. Since $\langle S_\rho\rangle$ has finite index in $F_r$, we conclude that $\rho$ has finite image.
\end{proof}

\subsection{Finite Burnside sets} We turn to the proof of Theorem \ref{mainthm}. In addition to Proposition \ref{burn}, we shall need Laurent's solution of Lang's $\G_m$ conjecture and Procesi's theory on invariants of $n\times n$ matrices.

Let $\G_m=\C^\times$ denote the multiplicative group. Let $n\geq1$ be an integer. By a torsion point on $\G_m^n=(\C^\times)^n$ we shall mean a point $(\zeta_1,\dots,\zeta_n)\in\G_m^n$ all of whose coordinates are roots of unity. Given a closed subvariety $Y$ of $\G_m^n$, by a torsion point on $Y$ we shall mean a point of $Y$ that is a torsion point of $\G_m^n$.

\begin{theorem}
    [Lang's $\G_m$ conjecture, Laurent \cite{laurent}] \label{lang}
    Let $Y$ be a closed subvariety of $\G_m^n$. Then the Zariski closure of the set of torsion points on $Y$ is a finite union of effectively determined torsion cosets of linear subtori of $\G_m^n$.
\end{theorem}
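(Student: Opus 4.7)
The plan is to prove the result by induction on the dimension of $Y$, with the Bilu equidistribution principle serving as the main analytic input. The approach rests on a dichotomy: either $Y$ is itself a torsion coset $\zeta H$ of an algebraic subtorus $H \leq \G_m^n$, in which case its torsion points are Zariski dense in $Y$; or the torsion points of $Y$ are contained in a proper closed subvariety $Y' \subsetneq Y$, in which case the inductive hypothesis applies to $Y'$. Since a torsion coset is tautologically a finite union of torsion cosets, iterating this dichotomy produces the finite union in the conclusion.

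First I would reduce to the case where $Y$ is geometrically irreducible and defined over a number field $k$. This is harmless, since the absolute Galois group $\Gal(\overline{\Q}/\Q)$ permutes both the irreducible components of $Y_{\overline{\Q}}$ and the torsion points of $\G_m^n$, so the Zariski closure of torsion points is Galois-stable and one may argue component by component; the base case $\dim Y = 0$ is immediate.

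The heart of the argument is establishing the dichotomy. Assuming $Y$ is not a torsion coset, I would invoke Bilu's equidistribution theorem: the normalized counting measures on Galois orbits of torsion points in $\G_m^n$ converge weakly to the uniform Haar measure on the compact polytorus $(S^1)^n \subseteq \G_m^n(\C)$. If torsion points were Zariski dense in $Y$, one could extract a sequence of Galois orbits of torsion points lying entirely inside $Y$, whose limiting measure would be supported on $Y(\C)$; but no proper subvariety of $\G_m^n$ supports the full Haar measure on $(S^1)^n$, yielding the desired contradiction and thus placing the torsion points inside a proper $Y' \subsetneq Y$.

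The hard part will be this equidistribution input, which itself relies on nontrivial height-theoretic estimates (for instance Zhang's theorem on successive minima of heights on subvarieties of tori). For the effective claim --- that the torsion cosets appearing in the conclusion can be explicitly determined --- I would instead follow Laurent's original route using Baker--W\"ustholz bounds on linear forms in logarithms of algebraic numbers, which produce effective bounds on the order of any torsion point in $Y$ not lying in an exceptional coset. For hypersurfaces $Y = \{f = 0\}$, the elementary Beukers--Smyth technique --- comparing $f(\zeta) = 0$ with $f(\zeta^k) = 0$ under the Galois substitutions $\zeta_i \mapsto \zeta_i^k$ for $k$ coprime to $\ord \zeta$, and invoking combinatorics of the Newton polytope of $f$ --- gives fully explicit bounds while bypassing transcendence theory entirely.
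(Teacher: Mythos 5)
The paper does not prove this statement at all: it is quoted verbatim from Laurent and used as a black box, so there is no internal argument to compare yours against; I can only assess your sketch on its own terms. The skeleton (reduce to $Y$ irreducible over a number field, dichotomy, induct) is a legitimate and known route to the qualitative statement, but the central step has a genuine gap: Bilu's equidistribution theorem applies only to \emph{strict} sequences, i.e.\ sequences of points such that every proper algebraic subgroup of $\G_m^n$ contains at most finitely many terms, and Zariski density of the torsion points of $Y$ does not by itself furnish such a sequence. The extraction genuinely fails when $Y$ is contained in a proper algebraic subgroup (equivalently, in a finite union of torsion cosets of a proper subtorus) without itself being a torsion coset: then \emph{every} torsion point of $Y$ lies in that subgroup, no strict sequence exists, and the correct move is to translate by a torsion point, descend to the subtorus, and induct on the ambient dimension $n$ --- an induction your setup (on $\dim Y$ alone) does not provide. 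Even when $Y$ lies in no proper subgroup, producing a strict sequence requires an argument: one must show that for each proper algebraic subgroup $G$ the torsion points of $Y\cap G$ are not Zariski dense in $Y$ (otherwise $Y\subseteq G$), then use the countability of algebraic subgroups and the irreducibility of $Y$ to choose terms eventually avoiding any finite union of them. None of this appears in the proposal, and it is precisely where the content lies; as stated, your dichotomy is essentially a restatement of the theorem. (A minor further point: since $Y$ is only defined over a number field $k$, you must use $\Gal(\overline{\Q}/k)$-orbits and the corresponding variant of Bilu's theorem.)

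The effectivity clause is also not delivered. The equidistribution argument is intrinsically ineffective (a compactness/contradiction argument), and your proposed remedy is misdirected: effective proofs of the torsion-point (Manin--Mumford) case for $\G_m^n$ do not proceed via Baker--W\"ustholz lower bounds for linear forms in logarithms, but via the Galois substitution $\zeta\mapsto\zeta^k$ together with lattice and Newton-polytope combinatorics (Mann, Conway--Jones, Beukers--Smyth, Dvornicich--Zannier, and Laurent's own treatment); moreover the elementary hypersurface argument you invoke must still be bootstrapped to arbitrary codimension (for instance by intersecting with hypersurfaces and inducting, or by the stabilizer/quotient argument), a step you do not supply. So the proposal, as written, gives at best the qualitative statement modulo the strictness gap above, and not the effective statement quoted in the paper.
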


Let now $r,n\geq1$ be integers. Let $\Mat_n\simeq\A^{n^2}$ be the affine scheme parametrizing $n\times n$ matrices. The group $\GL_n$ acts on $\Mat_n^r$ by simultaneous conjugation. Let $X_i$ denote the $i$th matrix variable, whose $n\times n$ coordinates form elements generating the coordinate ring $\Q[\Mat_n^r]$ of $\Mat_n^r$. It is clear that regular functions of the form $\tr(X_{i_1}\dots X_{i_j})$ are $\GL_n$-invariant. Procesi shows that in fact all $\GL_n$-invariant regular functions on $\Mat_n^r$ are polynomial combinations of such trace functions.

\begin{theorem}
    [Procesi \cite{procesi}] \label{procesi}
    The ring $\Q[\Mat_n^r]^{\GL_n}$ is finitely generated by elements of the form $\tr(X_{i_1}\dots X_{i_j})$ with $j\leq2^n-1$ and $i_k\in\{1,\dots,n\}$ for $k=1,\dots,j$.
\end{theorem}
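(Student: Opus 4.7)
The plan is to proceed in two stages: establish that the invariant ring $\Q[\Mat_n^r]^{\GL_n}$ is generated (without any \emph{a priori} degree bound) by trace monomials of the form $\tr(X_{i_1}\cdots X_{i_j})$, and then use the Cayley--Hamilton theorem to cap the length at $j \leq 2^n - 1$.

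For the first stage, I would invoke Weyl's polarization principle to reduce the question to computing the multilinear $\GL_n$-invariants of $\Mat_n^{\otimes k}$ for each $k \geq 1$. Identifying $\Mat_n \cong \C^n \otimes (\C^n)^*$ and applying Schur--Weyl duality, the commutant of $\GL_n$ acting on $(\C^n)^{\otimes k}$ is spanned by the image of the symmetric group $S_k$. Unwinding this through the natural pairing $\C^n \otimes (\C^n)^* \to \C$, the multilinear $\GL_n$-invariants on $\Mat_n^{\otimes k}$ are spanned by expressions indexed by $\sigma \in S_k$: each cycle of $\sigma$ contributes the trace of the product (in cyclic order) of the corresponding matrix variables, and the invariant is the product of these traces over the cycle decomposition. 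Polarization recovers the full polynomial invariant ring from its multilinear piece.

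For the bound, the Cayley--Hamilton theorem gives $M^n = \sum_{i=1}^{n} (-1)^{i+1} c_i(M)\, M^{n-i}$, where each elementary symmetric function $c_i(M)$ of the eigenvalues is, by Newton's identities, a $\Q$-polynomial in $\tr(M), \tr(M^2), \dots, \tr(M^i)$. Polarizing this identity in the matrix entries produces universal identities that rewrite any sufficiently long trace monomial as a $\Q$-linear combination of products of strictly shorter trace monomials. Iterating the reduction and carefully tracking the combinatorics of polarization yields the specific cutoff $2^n - 1$.

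The main obstacle I anticipate is the quantitative bookkeeping in the second stage: termination of the Cayley--Hamilton reduction is automatic, but arriving at the sharp bound $2^n - 1$ requires a delicate count of how polarization interacts with trace products (essentially tracking how many new shorter factors each reduction step creates, which is what produces the exponential). A technically cleaner but coarser alternative is to invoke Razmyslov's fundamental theorem on the T-ideal of the $n \times n$ matrix PI algebra, which yields the bound $n^2$; since the present paper only needs the existence of some uniform bound depending on $n$, either route suffices.
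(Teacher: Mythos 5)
You should first note that the paper offers no proof of this statement: Theorem \ref{procesi} is quoted from Procesi's 1976 paper as a black box, so your proposal can only be measured against the classical argument. Your first stage is exactly that argument: in characteristic zero, polarization and restitution reduce generation of $\Q[\Mat_n^r]^{\GL_n}$ to the multilinear invariants, and Schur--Weyl duality identifies these with the span of the permutations, each $\sigma\in S_k$ contributing the product of traces of the matrix variables taken along its cycles. That part is correct (the identification $\Mat_n\simeq\C^n\otimes(\C^n)^*$ is over $\C$, but descent to $\Q$-coefficients is harmless since the trace monomials are defined over $\Q$ and formation of invariants commutes with the base change $\Q\to\C$). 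Incidentally, the index set in the statement should read $i_k\in\{1,\dots,r\}$; that is a typo in the paper, not an issue with your argument.

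The gap is in the second stage. The length bound $2^n-1$ is not a matter of bookkeeping the polarizations of Cayley--Hamilton: the multilinear trace identity of degree $n+1$ obtained that way is $\sum_{\sigma\in S_{n+1}}\sgn(\sigma)\,\tr_\sigma(X_1,\dots,X_{n+1})=0$, where $\tr_\sigma$ denotes the product of traces along the cycles of $\sigma$, and this relation involves all the $(n+1)$-cycles simultaneously, so it does not by itself let you rewrite a given long trace monomial as a polynomial in strictly shorter ones, and iterating it gives no explicit terminal length. Procesi's actual route is to prove that the least $d$ such that trace monomials of length $\leq d$ generate is equal to the nilpotency degree in the Nagata--Higman problem, i.e.\ the least $d$ for which $x_1x_2\cdots x_d$ lies in the T-ideal generated by $x^n$ (equivalently, every nil algebra of exponent $n$ in characteristic zero is nilpotent of class $\leq d$), and then to invoke Higman's theorem giving $d\leq 2^n-1$. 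The missing idea is precisely this equivalence together with Nagata--Higman; naming it closes the gap. Your fallback via Razmyslov's bound $n^2$ is legitimate, and indeed any bound depending only on $n$ suffices for the paper's application, but note that it rests on the same equivalence with the nilpotency problem rather than replacing it.
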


We now restate and prove Theorem \ref{mainthm}.

\begin{thm}{\ref{mainthm}}
For any finitely generated group $F$ and integer $n\geq1$, there exists a finite Burnside set for $\Hom(F,\GL_n(\C))$.
\end{thm}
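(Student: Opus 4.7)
I would first reduce to the case $F = F_r$. If $F$ is generated by $r$ elements, a surjection $F_r\twoheadrightarrow F$ induces an embedding $\Hom(F,\GL_n(\C))\hookrightarrow\Hom(F_r,\GL_n(\C))$, and any finite Burnside set for $F_r$ descends to one for $F$. By Proposition~\ref{burn}, it then suffices to produce a finite set $L^*\subseteq F_r$ such that $\rho(\gamma)$ being quasi-unipotent for every $\gamma\in L^*$ forces $\rho(\gamma)$ to be quasi-unipotent for every $\gamma\in F_r$; the final Burnside set is then $L^*\cup U_{r,n}$, since finite order is stronger than quasi-unipotent.

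The construction of $L^*$ is where Procesi and Laurent combine. By Theorem~\ref{procesi}, the traces $t_w$ for $w$ in the finite set $T = T_{r,n}$ generate the coordinate ring $\C[X]$ of the character variety $X = \Hom(F_r,\GL_n(\C))\git\GL_n(\C)$, yielding a closed embedding $X \hookrightarrow \A^N$ with $N = |T|$. For each $\gamma\in F_r$, Newton's identities on $t_{\gamma},t_{\gamma^2},\ldots,t_{\gamma^n}$ express the coefficients of the characteristic polynomial of $\rho(\gamma)$ as polynomials in the $t_w$, yielding a morphism $c_\gamma : X \to \A^n$. For a finite set $L\subseteq F_r$ containing $T$, I would form the fiber product $\tilde X_L = X \times_{\A^{nL}} \G_m^{nL}$ along $c_L = (c_\gamma)_{\gamma\in L}$ and the block elementary-symmetric-polynomial map, and take $Y_L\subseteq\G_m^{nL}$ to be the Zariski closure of the image of $\tilde X_L$. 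By construction, torsion points of $Y_L$ correspond exactly to characters $\rho\in X$ for which $\rho(\gamma)$ is quasi-unipotent for every $\gamma\in L$, and Theorem~\ref{lang} gives that these torsion points lie in a finite union of torsion cosets of subtori of $\G_m^{nL}$.

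Let $Q_L\subseteq X$ be the set of characters $\rho$ with $\rho(\gamma)$ quasi-unipotent for every $\gamma\in L$, and let $X_f\subseteq X$ denote the finite-image locus. Clearly $X_f\subseteq Q_L$. Conversely, if $\rho(\gamma)$ is quasi-unipotent for every $\gamma\in F_r$, then each irreducible summand of $\rho$ has traces of absolute value at most $n$ on \emph{all} of $F_r$, so is unitarizable by Bass (Theorem~\ref{bass}), and then has finite image by Schur (Theorem~\ref{schur}); this gives $Q_{F_r} = X_f$, and $(Q_L)_L$ is a descending family whose intersection is $X_f$. Since $c_L$ is a closed embedding once $L\supseteq T$, each $Q_L$ is a discrete (countable) subset of $X$, in bijection (up to eigenvalue ordering) with the torsion points of $Y_L$. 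I would then argue by Noetherian induction on the lattice of subtori of $\G_m^{nL}$, using Laurent's finiteness of torsion cosets, that the descending chain $(Q_L)_L$ terminates at $X_f$ for some finite $L = L^*$: any positive-dimensional torsion coset of $Y_L$ containing a character outside $X_f$ admits a witness element $\gamma\in F_r$ with $\rho(\gamma)$ not quasi-unipotent at that character, and adjoining finitely many such witnesses to $L$ strictly refines the coset structure until only cosets corresponding to $X_f$ remain.

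The main obstacle is this last stabilization. The set $Q_L$ is only a countable union of closed subvarieties of $X$, not a closed subvariety, so Noetherianity on $X$ itself does not directly apply. The essential input is Laurent's organization of the torsion configurations into a finite collection of algebraic types, and only this finiteness allows one to conclude that the set-theoretic descending chain $(Q_L)_L$ stabilizes at a finite step rather than only in Zariski closure. The resulting argument is inherently nonconstructive, matching the abstract nature of the bound $\ell(r,n)$ in Theorem~\ref{word}.
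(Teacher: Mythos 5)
Your overall scaffolding (reduction to $F_r$, Proposition \ref{burn}, the Procesi trace embedding of $X$, the eigenvalue fiber product inside a torus, and Laurent's theorem) coincides with the paper's, but the step you yourself flag as ``the main obstacle'' is the actual content of the proof, and your sketch of it does not work as stated. Your mechanism is adaptive: for each character in $Q_L\setminus X_f$ choose a witness word $\gamma$ with non-quasiunipotent image, adjoin it to $L$, and claim the ``coset structure strictly refines.'' Two problems. First, $Q_L\setminus X_f$ may be infinite, and every adjoined witness changes the ambient torus from $\G_m^{nL}$ to a larger $\G_m^{nL'}$, so there is no fixed Noetherian object in which your refinement takes place and no quantity that demonstrably decreases; nothing in your argument rules out needing infinitely many witnesses. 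Second, even granting stabilization of Zariski closures, you still must convert closure-level information into the pointwise statement that quasiunipotence on a finite $L^*$ forces quasiunipotence on all of $F_r$; you acknowledge this (``rather than only in Zariski closure'') but supply no mechanism.

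The paper resolves both points with a device absent from your proposal: fix one level $\ell_0$ (large enough that Procesi gives a closed immersion $X\hookrightarrow(\A^n)^{L(\ell_0)}$ and that $L(\ell_0)\supseteq U_{r,n}$), and push \emph{every} $Z_\ell$ (the Zariski closure of the torsion points of $Y_\ell$, a finite union of torsion cosets by Theorem \ref{lang}) forward along the coordinate projection $\pi_{\ell,\ell_0}\colon(\G_m^n)^{L(\ell)}\to(\G_m^n)^{L(\ell_0)}$. The images form a descending chain of finite unions of torsion cosets inside the \emph{fixed} torus $(\G_m^n)^{L(\ell_0)}$, so Noetherianity applies there and the chain stabilizes at some $Z=Z_t'$. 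The upgrade from closures to points is then a finite-fiber lifting argument: for a torsion point $x\in Z$ and any $\ell\geq\ell_0$, the set $\pi_{\ell,\ell_0}^{-1}(x)\cap Z_\ell$ contains a torsion point because the $\pi_{\ell,\ell_0}$ are group homomorphisms (torsion cosets map onto torsion cosets), and it is finite because the character in $X$ is already determined at level $\ell_0$ while $(\G_m^n)^{L(\ell)}\to(\A^n)^{L(\ell)}$ is finite; since all points of that fiber record the same multisets of eigenvalues of the same character, every point of it is torsion, so $\rho(w)$ is quasiunipotent for all $w\in L(\ell)$ and hence for all of $F_r$. This is precisely the argument your plan needs and does not contain. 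Two smaller corrections: your $T$ conflates the set $T_{r,n}$ appearing in Proposition \ref{burn} (chosen to generate kernels of maps to small finite groups) with the Procesi generating words, which play an unrelated role; and the identity $Q_{F_r}=X_f$ holds only after you fix semisimple representatives of points of $X$ (Bass gives boundedness, hence diagonalizability, and only then do root-of-unity eigenvalues give finite order), which is fine for GIT closed points but should be said explicitly.
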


\begin{proof}
It is enough to consider the case where $F=F_r$ is a free group of rank $r\geq1$. It is also enough to show that a finite Burnside set exists for $\Hom(F_r,\SL_n(\C))$ for all $r,n\geq1$, since we have an embedding $\GL_n(\C)\to\SL_{n+1}(\C)$ given by
$$g\mapsto\begin{bmatrix}
    g & 0\\ 0 & \det(g)^{-1}
\end{bmatrix}.$$
Let $\gamma_1,\dots,\gamma_r$ be a set of free genrators of $F_r$. For $\ell\in\Z_{\geq0}$, let us write $L(\ell)$ for the subset of $F_r$ consisting of those $\gamma\in F_r$ of word length at most $\ell$ in $S=\{\gamma_1,\dots,\gamma_r\}$.

Let $X=X(F_r,\SL_n)=\SL_n^r\git\GL_n$ be the $\SL_n$-character variety of $F_r$. It is constructed as the invariant theoretic quotient of $\SL_n^r$ by the diagonal conjugation action of $\GL_n$. By Theorem \ref{mainthm} or Hilbert's basis theorem, $X$ is an integral scheme of finite type over $\Q$. For each $w\in F_r$, let us write $s_1(w),\dots,s_n(w)\in\Q[X]$ for the regular functions on $X$ given by the coefficients of the characteristic polynomial of the image of $w$. More precisely,
$$\det(\lambda-\rho(w))=\lambda^n+\sum_{i=1}^n s_i(\rho(w))\lambda^{n-i}.$$
Let us introduce the following notation. For each $\ell\geq0$, let us define the scheme $Y_\ell$ by the fiber product diagram
$$\xymatrix{Y_\ell \ar[r]\ar[d] & X\ar[d]\\(\G_m^n)^{L(\ell)}\ar[r] & (\A^n)^{L(\ell)}}$$
where the right vertical arrow is given by $\rho\mapsto(s_1(\rho(w)),\dots,s_n(\rho(w)))_{w\in L(\ell)}$, and the bottom horizontal arrow is given by sending each point of $(\G_m^n)^{L(\ell)}$ denoted $(e_1(w),\dots,e_n(w))_{w\in L(\ell)}$ to the elementary symmetric polynomials in the entries:
$$\left(\sum_{i=1}^ne_i(w),\cdots,\prod_{i=1}^ne_i(w)\right)_{w\in L(\ell)}.$$
Note that there is an action of $(S_n)^{L(\ell)}$ on $(\G_m^n)^{L(\ell)}$ by obvious permutations, with respect to which the bottom horizontal arrow is equivariant. Similarly, we have $(S_n)^{L(\ell')\setminus L(\ell)}$-invariant projections $$\pi_{\ell',\ell}:(\G_m^n)^{L(\ell')}\to (\G_m^n)^{L(\ell)}$$
for $\ell'\geq\ell$, inducing invariant morphisms $\pi_{\ell',\ell}: Y_{\ell'}\to Y_\ell$.

By Procesi's Theorem (Theorem \ref{procesi}), there is an effectively determined $\ell_0\geq0$ such that $X\to(\A^n)^{L(\ell)}$ is a closed immersion for all $\ell\geq\ell_0$. Thus, for $\ell\geq\ell_0$ the morphism $Y_{\ell}\to(\G_m^n)^{L(\ell)}$ is a closed immersion. We shall choose $\ell_0$ so large that in fact $L(\ell_0)$ contains the set $U_{r,n}$ constructed in Lemma \ref{bass}.
    For $\ell\geq\ell_0$, let $Z_{\ell}$ denote the Zariski closure of the set of torsion points on the closed subscheme $Y_\ell\subseteq(\G_m^n)^{L(\ell)}$. By Theorem \ref{lang}, $Z_{\ell}$ is an effectively determined finite union of torsion cosets of linear subtori in $(\G_m^n)^{L(\ell)}$. For $\ell'\geq\ell\geq\ell_0$, the morphism $\pi: Y_{\ell'}\to Y_\ell$ sends $Z_{\ell'}$ into $Z_{\ell}$. Let us write $$Z_{\ell}'=\pi_{\ell,\ell_0}(Z_{\ell}')\subset Z_{\ell_0}\quad\text{for all}\quad \ell\geq\ell_0.$$ Then we have a descending chain $Z_{\ell_0}=Z_{\ell_0}'\supseteq Z_{\ell_0+1}'\supseteq Z_{\ell_0+2}'\supseteq\dots$ of finite unions of torsion cosets of algebraic subtori in $(\G_m^n)^{L(\ell_0)}$, which must eventually stabilize. Let $$Z=\bigcap_{i=0}^\infty Z_{\ell_0+i}'$$
    and let us choose $t\geq\ell_0$ such that $Z_t'=Z$. If $x$ is a torsion point of $Z$, then $$\pi_{\ell,\ell_0}^{-1}(x)\cap Z_{\ell}$$ contains a torsion point for every $\ell\geq\ell_0$, because the projections $\pi_{\ell,\ell_0}$ are group homomorphisms. But $\pi_{\ell,\ell_0}^{-1}(x)\cap Z_{\ell}$ is finite, by our hypothesis on $\ell_0$; indeed, $(\G_m^n)^{L(\ell)}\to(\A^n)^{L(\ell)}$ has finite fibers and the image of any $y\in \pi_{\ell,\ell_0}^{-1}(x)\cap Z_{\ell}$ in $X\subset(\A^{n})^{L(\ell)}$ is determined by the image of $x$ in $X\subset(\A^{n})^{L(\ell_0)}$. It follows that some (and hence every) point in $\pi_{\ell,\ell_0}^{-1}(x)\cap Z_{\ell}$ is a torsion point in $(\G_m^n)^{L(\ell)}$ for every $\ell\geq\ell_0$. It follows that, if $\rho\in X$ is in the image of $Z$, then $\rho(\gamma)$ is quasiunipotent for all $\gamma\in F_r$. In particular, by Proposition \ref{burn}, for any representation $\rho:F_r\to\SL_n(\C)$, if $\rho(\gamma)$ has finite order for all $\gamma\in L(t)$, then $\rho$ has finite image. This proves the theorem.
\end{proof}

\section{Stokes matrices}\label{sect:3}

\subsection{Stokes matrices}
Fix an integer $r\geq1$. Let us write $[r]=\{1,\dots,r\}$, and endow it with the usual linear ordering. We shall endow any subset $I\subset[r]$ with the induced ordering. Given an $r\times r$ matrix $a=[x_{ij}]$ and $I\subseteq[r]$, we shall write $a_I=[x_{ij}]_{i,j\in I}$ for the corresponding submatrix consisting of the entries whose coordinates belong to $I$. For each $i\in[r]$, let $E_i$ be the $r\times r$ matrix having unique nonzero entry $1$ in the $(i,i)$th place. Thus, $\I=\sum_{i=1}^r E_i$ is the identity matrix. 

\begin{definition}
A \emph{Stokes matrix} of dimension $r$ is an $r\times r$ upper triangular unipotent matrix. Let $V(r)$ denote the space of Stokes matrices of rank $r$.
\end{definition}

Note that, if $s\in V(r)$ is a Stokes matrix, then the submatrix $s_I$ is a Stokes matrix of dimension $|I|$ for any $I\subseteq[r]$. Below, let $F_r$ denote the free group of rank $r$ with free generators $\gamma_1,\dots,\gamma_r$. For $I\subseteq[r]$, we shall write $\gamma_I=\gamma_{i_1}\dots\gamma_{i_{|I|}}$. Given a Stokes matrix, we can define a representation $\rho:F_r\to\GL_r$ as follows.

\begin{definition}[Associated representation]
Let $k$ be a field, and let $s\in V(r)(k)$. 
\begin{enumerate}
    \item Let $\rho_s:F_r\to\GL_r(k)$ be the representation such that
$$\rho_s(\gamma_i)=\mathbb I-E_i(s+s^T)$$ for $i=1,\dots,r$.
    \item Let $U_0^s$ denote the kernel of $s+s^T$ on $U=k^r$, and let $W^s=U/U_0^s$.
    \item Write $\tilde\rho_s$ for the representation $F_r\to\GL(W^s)$ induced by $\rho_s$.
\end{enumerate}
\end{definition}

\begin{lemma}
\label{genc}
Let $k$ be a field. Given $s\in V(r)(k)$, we have the following.
\begin{enumerate}
    \item (Coxeter identity) We have $\rho(\gamma_{[r]})=-s^{-1}s^T$.
    \item The symmetric pairing $s+s^T$ on $k^r$ is preserved by $\rho_s$.    
    \item For any $I\subseteq[r]$, we have $\det(\lambda-\rho_s(\gamma_I))=(\lambda-1)^{r-|I|}\det(\lambda+s_I^{-1}s_I^T)$.
    \item $U_0^s$ is an invariant subspace of $U$ on which $F_r$ acts trivially via $\rho_s$.
    \item $\tilde\rho_s$ preserves the nondegenerate symmetric pairing on $W^s$ induced by $s+s^T$.
\end{enumerate}
\end{lemma}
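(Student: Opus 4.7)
The plan is to handle parts (2), (4), and (5) directly; then prove the Coxeter identity (1) by induction on $r$; and finally deduce (3) from (1) via a block-triangular argument.

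Parts (2), (4), and (5) should be quick. Writing $B = s + s^T$, the key fact for (2) is that $E_i B E_i = B_{ii} E_i = 2 E_i$, since the single nonzero entry of $E_i B E_i$ sits in position $(i,i)$ and equals $B_{ii} = 2$ (because $s$ is unipotent). Expanding
$$\rho_s(\gamma_i)^T B\, \rho_s(\gamma_i) = (\I - BE_i)\, B\, (\I - E_iB) = B - 2 BE_iB + BE_iBE_iB$$
and substituting $E_iBE_iB = 2E_iB$, everything collapses to $B$. Part (4) is immediate: for $v \in U_0^s = \ker B$, we have $\rho_s(\gamma_i) v = v - E_i B v = v$, so each $\rho_s(\gamma_i)$ (and hence all of $\rho_s$) fixes $U_0^s$ pointwise. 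Part (5) then follows, since by (2) the form $B$ is preserved, its radical is exactly $U_0^s$, and so $B$ descends to a nondegenerate symmetric pairing on $W^s$ preserved by $\tilde\rho_s$.

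For the Coxeter identity (1), I would induct on $r$ using the block decomposition $s = \begin{pmatrix} 1 & u \\ 0 & s' \end{pmatrix}$, where $s'$ is a Stokes matrix of dimension $r-1$ and $u$ is a row vector of length $r-1$. The base case $r = 1$ gives $-1$ on both sides. For $r \geq 2$, one checks directly that $\rho_s(\gamma_1) = \begin{pmatrix} -1 & -u \\ 0 & \I_{r-1} \end{pmatrix}$, while for $i \geq 2$,
$$\rho_s(\gamma_i) = \begin{pmatrix} 1 & 0 \\ -u_{i-1}\, e'_{i-1} & \rho_{s'}(\gamma_{i-1}) \end{pmatrix}.$$
Multiplying, the bottom-right $(r-1)\times (r-1)$ block of $\rho_s(\gamma_1 \cdots \gamma_r)$ becomes $\rho_{s'}(\gamma_1 \cdots \gamma_{r-1}) = -(s')^{-1}(s')^T$ by the inductive hypothesis. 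Comparing with the block form of $-s^{-1}s^T$ and using that the first column of $s$ is $e_1$ (so $sE_1 = E_1$), one matches the remaining blocks by a short direct computation.

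For (3), observe from the formula $\rho_s(\gamma_i) v = v - (Bv)_i\, e_i$ that each $\rho_s(\gamma_i)$ alters only the $i$-th coordinate. Consequently, for $I = \{i_1 < \cdots < i_u\} \subseteq [r]$, the product $\rho_s(\gamma_I)$ leaves each coordinate indexed by $[r] \setminus I$ unchanged and preserves $V_I = \Span\{e_i : i \in I\}$. In the reordered basis listing the $I$-coordinates first, the operator takes block form $\begin{pmatrix} A & * \\ 0 & \I_{r-|I|} \end{pmatrix}$, and the block $A$ acts on $V_I \cong k^{|I|}$ in exactly the same way that $\rho_{s_I}(\gamma_1 \cdots \gamma_{|I|})$ acts on $k^{|I|}$, since $(s + s^T)_I = s_I + s_I^T$. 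Applying (1) to the Stokes matrix $s_I$ identifies $A$ with $-s_I^{-1} s_I^T$, and reading off the characteristic polynomial gives the formula. The main obstacle here is (1); the other parts are either direct computations or soft consequences of (1) together with block-triangular considerations.
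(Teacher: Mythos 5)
Your treatments of (2), (4), and (5), and your reduction of (3) to (1) via the invariant subspace $U_I$ with trivial quotient action, are correct and essentially the same as the paper's. The problem is in (1), which is the crux. In your induction on $r$, the inductive hypothesis only controls the \emph{full} product $\rho_{s'}(\gamma_1\cdots\gamma_{r-1})$, i.e.\ the bottom-right block of $\rho_s(\gamma_{[r]})$. Since left multiplication by $\rho_s(\gamma_1)=\I-E_1(s+s^T)$ only alters the first row, the block you still have to match is the bottom-left column
$$c\;=\;-\sum_{j=1}^{r-1}u_j\,\rho_{s'}(\gamma_1\cdots\gamma_{j-1})\,e'_j,$$
which involves \emph{all the partial products} $\rho_{s'}(\gamma_1\cdots\gamma_{j-1})$. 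Matching it with the corresponding block $-(s')^{-1}u^T$ of $-s^{-1}s^T$ for every $u$ is equivalent to the family of identities $s'\rho_{s'}(\gamma_1\cdots\gamma_{j-1})e'_j=e'_j$ for all $j$; these do not follow from the Coxeter identity for $s'$ alone, and the fact that $sE_1=E_1$ does not supply them either. So the ``short direct computation'' is precisely the missing content, not a routine check.

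The fix is to strengthen the statement you induct on so that it tracks the partial products. The paper proves, by induction on $k$, that $s\,\rho_s(\gamma_1)\cdots\rho_s(\gamma_k)$ has the same first $k$ rows as $-s^T$ and the same last $r-k$ rows as $s$: the point is that this forces the $(k+1)$-st column of the current matrix to be $e_{k+1}$ (using that $s$ is upper triangular unipotent), so right multiplication by $\I-E_{k+1}(s+s^T)$ changes only row $k+1$, replacing it by the corresponding row of $-s^T$. Taking $k=r$ gives $s\,\rho_s(\gamma_{[r]})=-s^T$, which is (1). Your block decomposition can be salvaged in the same spirit, but only by carrying the column identities $s'\rho_{s'}(\gamma_1\cdots\gamma_{j-1})e'_j=e'_j$ (equivalently the row statement above) along as part of the inductive hypothesis.
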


\begin{proof}
We note that, for each $k$, the product $s(1-E_1(s+s^T))\cdots (1-E_k(s+s^T))$ has the same first $k$ rows as $-s^T$ and the same last $n-k$ rows as $s$. In particular, we have $s\rho(\gamma_{[r]})=-s^T$, proving (1). To prove (2), it suffices to show hat $$\rho_s(\gamma_k)^T(s+s^T)\rho_s(\gamma_k)=s+s^T$$ for each $i=1,\dots,r$. By definition,
\begin{align*}
\rho_s(\gamma_k)^T(s+s^T)\rho_s(\gamma_k)&=(1-E_k(s+s^T))^T(s+s^T)(1-E_k(s+s^T))\\
&=(1-(s+s^T)E_k)(s+s^T)(1-E_k(s+s^T))=s+s^T
\end{align*}
noting that $E_k(s+s^T)E_k=2E_k$. This gives (2). To prove (3), let $v_1,\dots,v_r$ denote the standard basis vectors of $U=k^r$. Let $I=\{i_1<\dots<i_{|I|}\}$. Note that $\rho_s(\gamma_i)$ for each $i\in I$ preserves $U_I:=\Span(v_i:i\in I)$, and acts trivially on the quotient $U/U_I$. Since the action of $\rho_s(\gamma_I)$ on $U_I$ is given by $-s_I^{-1}s_I^{T}$ by (1), we obtain (3). Parts (4) and (5) are clear.
\end{proof}

\begin{definition}
    Let $K$ be a field complete with respect to a nontrivial absolute value $|\cdot|$. We say that a subgroup $G\leq\GL_n(K)$ is \emph{bounded} if it lies in a compact subgroup of $\GL_n(K)$, with respect to the topology induced by $|\cdot|$.
\end{definition}

\begin{proposition}
\label{elliptic}
Let $K$ be field complete with respect to a nontrivial absolute value $|\cdot|$. Let $s\in V(r)(K)$ be a Stokes matrix. If every eigenvalue of $\rho_s(\gamma_I)$ has absolute value $1$ for every $I\subseteq[r]$, the semisimplification of $\rho_s$ has bounded image.
\end{proposition}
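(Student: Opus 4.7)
The plan is to leverage that $\rho_s$ preserves the symmetric bilinear form $B = s+s^T$, and to combine this with Sylvester's criterion on positive semidefiniteness to produce a compact orthogonal group containing the image. By Lemma \ref{genc}(4),(5), $U_0^s = \ker B$ is pointwise fixed by $\rho_s$ while $\tilde\rho_s$ on the quotient $W^s$ preserves the induced nondegenerate form $\bar B$. The semisimplification of $\rho_s$ thus splits as a trivial summand on $U_0^s$ (automatically bounded) plus the semisimplification of $\tilde\rho_s$, so the task reduces to bounding $\tilde\rho_s^{ss}$. Using Lemma \ref{genc}(3), the eigenvalue hypothesis becomes: for every $I \subseteq [r]$, every eigenvalue $\mu$ of $s_I^{-1}s_I^T$ satisfies $|\mu| = 1$.

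The key computation, using that $s_I$ is unipotent so $\det(s_I) = 1$, is
\[
\det(s_I + s_I^T) = \det(I + s_I^{-1} s_I^T) = \prod_{i=1}^{|I|}(1 + \mu_i^I).
\]
Over $K = \R$, the $\mu_i^I$ come in complex-conjugate pairs, and each pair contributes $(1+\mu)(1+\bar\mu) = 2(1+\Re\mu) \geq 0$, with real eigenvalues $\mu = \pm 1$ giving factors $2$ or $0$. Consequently every principal minor of $B$ is nonnegative, Sylvester's criterion yields $B \succeq 0$, and $\bar B$ is positive definite on $W^s$. The image of $\tilde\rho_s$ then lies inside the compact group $\Ort(\bar B, \R)$, establishing the conclusion in the archimedean real case.

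The main obstacle is to extend this to a general complete $K$, where the naive ``principal minors nonnegative'' statement fails -- notably when $K = \C$ or $K$ is non-archimedean. For non-archimedean $K$, the ultrametric upgrades the principal-minor identity to $|\det(s_I + s_I^T)| \leq 1$ for every $I$, and the hope is to translate this bound into anisotropy or $\Ocal_K$-integrality of $\bar B$, thereby embedding $\tilde\rho_s(F_r)$ in a compact orthogonal group over $K$. For $K = \C$, one can either view $\C$-boundedness as $\R$-boundedness of doubled real dimension, or combine $\rho_s$ with its complex-conjugate representation to manufacture an invariant Hermitian form whose positive-definiteness again follows from the principal-minor identity. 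Matching the right form-theoretic notion of definiteness to compactness of the corresponding orthogonal group over each type of completion is where I expect the bulk of the technical work to lie.
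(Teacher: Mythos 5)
Your treatment of the real archimedean case coincides with the paper's: the principal minors of $s+s^{T}$ are products of factors $1-\zeta$ over unit eigenvalues, the generalized Sylvester criterion gives positive semidefiniteness, and $\tilde\rho_{s}$ lands in a compact orthogonal group. The two cases you defer, however, are genuine gaps, and the directions you sketch would not close them. For non-archimedean $K$ you propose to pass from the bound $|\det(s_{I}+s_{I}^{T})|\leq 1$ to ``anisotropy or $\Ocal$-integrality of $\bar B$'' and thence to compactness. But preserving an $\Ocal$-integral nondegenerate form does not confine a matrix to $\GL(\Ocal)$: the isometry group of the hyperbolic form $xy$ over $\Q_{p}$ contains $\diag(p,p^{-1})$, so integrality of $\bar B$ buys nothing, and anisotropy neither follows from the eigenvalue hypothesis nor is available by your route. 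The paper instead bounds the representing matrices directly, using the $|I|=2$ case of Lemma \ref{genc}(3) at the level of entries rather than determinants: the characteristic polynomial of $\rho_{s}(\gamma_{\{i,j\}})$ gives $x_{ij}^{2}=2-(\zeta_{1}+\zeta_{2})$ with $|\zeta_{1}|=|\zeta_{2}|=1$, so $x_{ij}\in\Ocal$; hence each generator $\rho_{s}(\gamma_{i})=\I-E_{i}(s+s^{T})$ lies in $\GL_{r}(\Ocal)$ and, being an involution, so does its inverse, whence the entire image (not merely its semisimplification) is bounded.

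For $K=\C$ the same entrywise identity is again the missing step: it gives $x_{ij}^{2}=2-2\Re(\zeta_{1})\geq 0$, forcing every $x_{ij}$ to be real, so $s\in V(r)(\R)$ and the complex case collapses to the real one you already handled. Neither of your suggested alternatives reaches this: viewing $\C$ as $\R^{2}$ does not make Sylvester applicable to the complex symmetric form $s+s^{T}$, whose isometry group $\Ort(n,\C)$ is non-compact, so invariance of $\bar B$ over $\C$ gives no boundedness; and there is no evident $\rho_{s}$-invariant Hermitian form to ``manufacture'' from $\rho_{s}$ and its conjugate --- invariance of a Hermitian form is a different condition from invariance of the bilinear form $s+s^{T}$, and the natural candidates are invariant only when the entries are already real, which is precisely what must be proved. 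In short, both deferred cases are resolved by one further use of the $2\times 2$ principal submatrices, reading off the trace $x_{ij}^{2}-2$ rather than the determinant.
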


\begin{proof}
Let $s=[x_{ij}]\in V(r)$ be such that $\rho_s(\gamma_I)$ is elliptic for every $I\subseteq[r]$. By applying Lemma \ref{genc}(3) with $I=\{i,j\}$, we see that
$$\lambda^2-(2-x_{ij}^2)\lambda+1=\det(\lambda+s_I^{-1}s_I^T)=(\lambda-\zeta_1)(\lambda-\zeta_2)$$
for some $\zeta_1,\zeta_2=\zeta_1^{-1}\in K$ with absolute value $1$ (for the unique absolute value on $K(\zeta_1)$ extending the one on $K$). If $|\cdot|$ is nonarchimedean with valuation ring $\Ocal$, then $x_{ij}^2-2=\zeta_1+\zeta_2\in\Ocal$ and hence $x_{ij}\in\Ocal$. This shows $\Img(\rho_s)\leq\GL_r(\Ocal)$ and we are done. Thus, it remains to treat the case where $|\cdot|$ is archimedean. We may assume that $K=\R$ or $\C$ and that $|\cdot|$ is the usual absolute value on $\C$. In this case, $\zeta_1=\bar\zeta_2\in\C$ and $x_{ij}^2=2-2\Re(\zeta_1)\geq0$ and hence $x_{ij}\in\R$ for each $1\leq i<j\leq r$. This shows that $s\in V(r)(\R)$, and we may henceforth assume that $K=\R$. Next, we claim that $s+s^T$ is positive semidefinite. We recall the following generalization of Sylvester's criterion:

\begin{fact}
\label{sylv}
Suppose $a$ is an $r\times r$ real symmetric matrix such that $\det(a_I)\geq0$ for every $I\subseteq[r]$. Then $a$ is positive semidefinite.
\end{fact}

It thus suffices to show that $\det((s+s^T)_I)\geq0$ for each $I\subseteq[r]$. We have
$$\det((s+s^T)_I)=\det(s_I+s_I^T)=\det(\mathbb I+s_I^{-1}s_I^T).$$
By Lemma \ref{genc}(3), the right hand side is a product of numbers of the form $1-\zeta$ where $\zeta$ is an eigenvalue of $\rho_s(\gamma_I)$. Moreover, the nonreal factors $1-\zeta$ come in conjugate pairs since $s_I$ has real entries. Since the eigenvalues of $\rho_s(\gamma_I)$ have absolute value $1$, it follows that $\det((s+s^T)_I)\geq0$. Thus $s+s^T$ is positive semidefinite.

By Lemma \ref{genc}(5), the associated representation $\tilde\rho_s$ on $W^s$ preserves the positive definite symmetric pairing induced by $s+s^T$. It follows that $\Img(\tilde\rho_s)$ lies in a compact subgroup of $\GL(W_s)$, and is in particular semisimple. The semisimplification of $\rho_s$, being the direct sum of $\tilde\rho_s$ with $\dim U_0$ copies of the trivial representation, therefore is bounded.
\end{proof}

\begin{corollary}
\label{stokesfin}
Let $k$ be a field of characteristic zero. Let $s\in V(r)(k)$ be a Stokes matrix. If $\rho_s(\gamma_I)$ has finite order for every $I\subseteq[r]$, then the semisimplification of $\rho_s$ has finite image.
\end{corollary}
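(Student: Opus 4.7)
My plan is to run a Galois-theoretic argument combining Proposition \ref{elliptic}, applied at every archimedean place, with Kronecker's theorem, so that Schur's theorem (Theorem \ref{schur}) can close the argument.

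The first step will be to verify that every off-diagonal entry $x_{ij}$ of $s$ is an algebraic integer over $\Q$. Applying Lemma \ref{genc}(3) with $I=\{i,j\}$, the nontrivial factor of the characteristic polynomial of $\rho_s(\gamma_{\{i,j\}})$ works out to $\lambda^2+(2-x_{ij}^2)\lambda+1$. Since $\rho_s(\gamma_{\{i,j\}})$ has finite order by hypothesis, its non-identity eigenvalues are inverse roots of unity $\zeta_{ij},\zeta_{ij}^{-1}$, so $x_{ij}^2=2+\zeta_{ij}+\zeta_{ij}^{-1}$ is an algebraic integer, and hence so is $x_{ij}$. Let $k_0\subseteq k$ denote the number field the $x_{ij}$ generate; since $\rho_s(\gamma_i)=\mathbb I-E_i(s+s^T)$ has entries in $\Z[x_{ij}]$, every entry of every $\rho_s(w)$, $w\in F_r$, is an algebraic integer of $k_0$.

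Next I will vary the complex embedding. For each embedding $\tau:k_0\hookrightarrow\C$, the Stokes matrix $\tau(s)\in V(r)(\C)$ satisfies the hypothesis of Proposition \ref{elliptic}: the eigenvalues of $\rho_{\tau(s)}(\gamma_I)=\tau(\rho_s(\gamma_I))$ are $\tau$-images of roots of unity and hence have absolute value $1$. Proposition \ref{elliptic} then gives that the semisimplification of $\rho_{\tau(s)}$ has bounded image in $\GL_r(\C)$. In particular, for every $w\in F_r$ the eigenvalues of that semisimplification --- equivalently, the $\tau$-images of the eigenvalues of the semisimplification of $\rho_s(w)$ --- all have absolute value $1$.

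Combining the two previous steps, every eigenvalue $\alpha$ of the semisimplification of $\rho_s(w)$ is an algebraic integer all of whose Galois conjugates over $\Q$ have absolute value $1$, so Kronecker's theorem forces $\alpha$ to be a root of unity. Being semisimple with roots-of-unity eigenvalues, the semisimplification of $\rho_s(w)$ then has finite order for each $w\in F_r$. Fixing any embedding $k_0\hookrightarrow\C$, the image of the semisimplification of $\rho_s$ in $\GL_r(\C)$ is finitely generated (by the involutions $\rho_s(\gamma_1),\dots,\rho_s(\gamma_r)$) and wholly torsion, so Schur's theorem concludes that it is finite. The main obstacle is precisely this bridging step: Proposition \ref{elliptic} alone supplies only ``absolute value $1$'' under one embedding, and it is the integrality of the $x_{ij}$ --- extracted from the two-generator case of the hypothesis --- that licenses Kronecker's theorem and thereby upgrades boundedness at every archimedean place to actual torsion.
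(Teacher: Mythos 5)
Your proposal follows essentially the same route as the paper's proof: integrality of the $x_{ij}$ from Lemma \ref{genc}(3) applied to two-element subsets, reduction to a number field, Proposition \ref{elliptic} at every archimedean embedding, Kronecker's theorem, and finally Schur's theorem. One step, however, needs more care as written: you conclude that the semisimplification of $\rho_s(w)$ has finite order ``being semisimple with roots-of-unity eigenvalues,'' but semisimplicity of the \emph{representation} $\rho_s^{\mathrm{ss}}$ does not by itself make each matrix $\rho_s^{\mathrm{ss}}(w)$ diagonalizable (an irreducible representation can contain unipotent elements, and a non-semisimple matrix with root-of-unity eigenvalues has infinite order). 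The needed matrix-level semisimplicity does follow from what you already have: boundedness from Proposition \ref{elliptic} means the image is conjugate into $U(r)$ under any fixed embedding, so every element is diagonalizable, and diagonalizability descends along the embedding; the paper makes exactly this point explicit when it notes that $\tilde\rho_s^\sigma(\gamma)$ is semisimple because the image is bounded. With that one sentence added, your argument is complete and coincides with the paper's.
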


\begin{proof}
First, we may assume that $k$ is algebraically closed. Let us write $s=[x_{ij}]$. By Lemma \ref{genc}(3) with $I=\{i,j\}$, we see by arguing as in the proof of Proposition \ref{elliptic} that every $x_{ij}$ is an algebraic integer, and hence the eigenvalues of $\rho_s(\gamma)$ are all algebraic integers for all $\gamma\in F_r$. Moreover, we have $\Img(\rho_s)\subseteq\GL_r(\bar\Q)$, and hence we may assume that $k=\bar\Q$. Now, for each embedding $\sigma:\bar\Q\hookrightarrow\C$, it follows by Proposition \ref{elliptic} that the image of
$$\tilde\rho_s^\sigma=\sigma\circ \tilde\rho_s:F_r\to\GL(W^s)\to\GL(W^s\otimes\C)$$
is semisimple, and in particular $\tilde\rho_s^\sigma(\gamma)$ is semisimple and has absolute $1$ for all $\gamma\in F_r$. We thus deduce that for any $\gamma\in F_r$ the eigenvalues of $\rho_s(\gamma)$ are algebraic integers all of whose conjugates have absolute value $1$, and hence are roots of unity by Kronecker's theorem. Since $\tilde\rho_s(\gamma)$ is semisimple, it follows that $\tilde\rho_s(\gamma)$ has finite order for every $\gamma\in F_r$, and therefore $\tilde\rho_s$ has finite image by Schur's theorem. Since the semisimplification of $\rho_s$ is a direct sum of $\tilde\rho_s$ and $\dim U_0$ copies of the trivial representation, the desired result follows.
\end{proof}

\section{Reflective representations}\label{sect:4}

\subsection{Reflective representations} Our aim here is to prove Theorem \ref{refthm}. Let us first introduce the notion of reflective representations. Let $(\cdot,\cdot)$ be the standard symmetric bilinear form on $\C^n$. Let $q(x_1,\dots,x_n)=x_1^2+\dots+x_n^2$ be the associated quadratic form. Let $S(n)=\{v\in \C^n:q(v)=1\}$ denote the (algebrai) unit sphere.

\begin{definition}
\label{reflective}
    Let $F_r=\langle\gamma_1,\dots,\gamma_r\rangle$ be a free group of rank $r$. Given a sequence $u=(u_1,\dots,u_r)\in S(n)^r$, the \emph{reflective representation} $\rho_u:F_r\to\GL_n(\C)$ associated to $u$ is the representation such that
    $$\rho(\gamma_i)(v)=v-2(v,u_i)u_i$$
    for all $v\in\C^n$ and $i=1,\dots,r$. We call a representation $\rho=F_r\to\GL_n(\C)$ \emph{reflective} if $\rho=\rho_u$ for some $u\in S(n)^r$.
\end{definition}

The above terminology is motivated by the fact that, if the associated vectors $u_1,\dots,u_r$ lie in $\R^n$, then the image of $F_r$ in $\Ort(n,\R)$ is a reflection group. For later use, it will be useful to set up some terminology for more general quadratic spaces. Our conventions and notations are imported from \cite{fw}. Let $k$ be a field of characteristic zero. Let $V$ be a finite-dimensional vector space over $k$, also viewed as an affine space over $k$. Let $q$ be a nondegenerate quadratic form on $V$. We define the \emph{unit sphere} $S(q)$ to be the affine hypersurface $S(q)=\{u\in V:q(u)=1\}$. The orthogonal group $\Ort(q)$ of $q$ is the automorphism group of the quadratic space $(V,q)$. In other words, $$\Ort(q)=\{g\in\GL(V):q(gv)=q(v)\text{ for all }v\in V\}\leq\GL(V).$$
The special orthogonal group of $q$ is $\SO(q)=\Ort(q)\cap\SL(V)$. Let $$\Cl(q)=\Cl^0(q)\oplus\Cl^1(q)$$ be the $\Z/2$-graded Clifford algebra associated to $(V,q)$, and let $j:V\to\Cl^1(V)$ be the canonical embedding. We denote by $\alpha$ the automorphism of $\Cl(q)$ induced by the automorphism $v\to -v$ of the quadratic space $(V,q)$.

We define the pin group $\Pin(q)$ as the subgroup of $\Cl^\times(q)$ generated by $j(S(q))$. We can write $\Pin(q)=\Pin^0(q)\sqcup\Pin^1(q)$ where $\Pin^i(q)=\Pin(q)\cap \Cl^i(q)$ for $i=0,1$. By definition, the spin group is $\Spin(q)=\Pin^0(q)$. We have a natural surjective morphism $\pi:\Pin(q)\to \Ort(q)$ which sends $g\in\Pin(q)$ to the orthogonal transformation $\pi(g)\in\Ort(q)$ of $V$ given by
$$\pi(g)(v)=\alpha(g)\otimes v\otimes g^{-1}\quad\text{for all}\quad v\in V.$$
The image of $\Spin(q)$ under $\pi$ is equal to $\SO(q)$. The following definition was introduced in \cite{fw}.
\begin{definition}
Let $(V,q)$ be a nondegenerated quadratic space over a field $k$ of characteristic zero, and let $n\geq1$ be an integer.
\begin{enumerate}
    \item The \emph{moduli space of $r$ points on $S(q)$} is
$A(r,n)=S(q)^r\git\SO(q)$.
    \item The \emph{moduli space of $r$ unoriented points on $S(q)$} is $A'(r,q)=S(q)^r\git\Ort(q)$.
\end{enumerate}
Here, $\SO(q)$ and $\Ort(q)$ act diagonally on $S(q)^r$.
\end{definition}

For $(V,q)=(\C^n,x_1^2+\dots+x_n^2)$, we recover $S(n)=S(q)$, and we shall write $\Ort(n)=\Ort(q)$, $\Cl(n)=\Cl(q)$, $A(r,n)=A(r,q)$, etc.

\begin{proposition}
    The morphism $S(n)^r\to\Hom(F_r,\GL_n)$ given by construction of the associated reflective representation $u\mapsto\rho_u$ descends to a morphism
    $$\Phi:A'(r,n)\to X(F_r,\GL_n)=\Hom(F_r,\GL_n)\git\GL_n.$$
\end{proposition}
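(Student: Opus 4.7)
The plan is to verify that the map $\Phi_0\colon S(n)^r \to \Hom(F_r,\GL_n)$ given by $u\mapsto\rho_u$ is a morphism of schemes that is $\Ort(n)$-equivariant, where $\Ort(n)$ acts on $\Hom(F_r,\GL_n)$ by simultaneous conjugation via the inclusion $\Ort(n)\subseteq\GL_n$. Composition with the GIT quotient map then yields an $\Ort(n)$-invariant morphism $S(n)^r \to X(F_r,\GL_n)$, which by the universal property of the GIT quotient $A'(r,n)=S(n)^r\git\Ort(n)$ descends to the desired morphism $\Phi$.

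First I would observe that $\Phi_0$ is in fact a morphism of schemes: the formula $\rho_u(\gamma_i)(v)=v-2(v,u_i)u_i$ shows that each matrix entry of $\rho_u(\gamma_i)$ is a polynomial in the coordinates of $u_i$, so $\Phi_0$ is regular as a map $S(n)^r\to\GL_n^r\simeq\Hom(F_r,\GL_n)$ (note $\rho_u(\gamma_i)$ is invertible, being a reflection, so lands in $\GL_n$).

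The key computation is equivariance. For $g\in\Ort(n)$, $u=(u_1,\dots,u_r)\in S(n)^r$, and $v\in\C^n$, I would use that $g$ preserves $(\cdot,\cdot)$ to compute
$$\rho_{g\cdot u}(\gamma_i)(v) \;=\; v - 2(v,gu_i)\,gu_i \;=\; v - 2(g^{-1}v,u_i)\,gu_i \;=\; g\bigl(\rho_u(\gamma_i)(g^{-1}v)\bigr),$$
so $\rho_{g\cdot u}(\gamma_i) = g\,\rho_u(\gamma_i)\,g^{-1}$ for each $i$. Hence $\rho_{g\cdot u}$ is the simultaneous $\GL_n$-conjugate of $\rho_u$ by $g$, and the composition $S(n)^r \to \Hom(F_r,\GL_n) \to X(F_r,\GL_n)$ is $\Ort(n)$-invariant.

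Finally, since $\Ort(n)$ is reductive, $A'(r,n)=S(n)^r\git\Ort(n)$ is the categorical quotient in the category of affine schemes, so any $\Ort(n)$-invariant morphism from $S(n)^r$ to an affine scheme factors uniquely through $A'(r,n)$. Applying this to the invariant morphism into $X(F_r,\GL_n)$ produces $\Phi$. No substantive obstacle arises; the essential ingredient is the orthogonality of $g$ in the equivariance step, which makes the reflection formula transform covariantly under the action on $u$.
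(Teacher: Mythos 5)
Your proposal is correct and follows essentially the same route as the paper: the heart of both arguments is the computation $\rho_{g\cdot u}(\gamma_i)(v)=v-2(v,gu_i)gu_i=g(\rho_u(\gamma_i)(g^{-1}v))$ for $g\in\Ort(n)$, showing the composed map $S(n)^r\to X(F_r,\GL_n)$ is $\Ort(n)$-invariant, followed by the universal property of the categorical quotient $A'(r,n)=S(n)^r\git\Ort(n)$. Your added remarks on regularity of $u\mapsto\rho_u$ are fine but not a substantive difference.
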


\begin{proof}
    Since $A'(r,n)$ is the categorical quotient of $S(n)^r$ by $\Ort(n)$, it suffices to show that if $u,u'\in S(n)^r$ are $\Ort(n)$-equivalent then they have the same image in $X(F_r,\GL_n)$. But if $u=g\cdot u'$, then
    $$\rho_u(\gamma_i)(v)=v-2(v,u_i)u_i=v-2(v,g\cdot u_i')g\cdot u_i'=g(\rho_{u'}(\gamma_i)(g^{-1}v))$$
    for all $v\in V$ and $i=1,\dots,r$. This gives us the desired result.
\end{proof}

\begin{proposition}
\label{ref2}
Let $r\geq1$ be an integer.
\begin{enumerate}
    \item We have a chain of closed immersions
$$A'(r,1)\hookrightarrow A'(r,2)\hookrightarrow\dots\hookrightarrow A'(r,r)$$
induced by the standard embedding of quadratic spaces $$(\C^m,x_1^2+\dots+x_m^2)\to(\C^n,x_1^2+\dots+x_n^2)$$ for $m\leq n$.
For $r\leq m$, we have isomorphisms $A'(r,r)\simeq A'(r,m)$.
    \item We have an isomorphism $A'(r,r)\to V(r)$ given by taking $u=[u_1,\dots,u_r]$ to the unique Stokes matrix $s=s(u)$ such that
$[2(u_i,u_j)]=s+s^T.$
    \item Given integers $r\geq1$ and $1\leq m\leq n$, the diagram
    $$\xymatrix{A'(r,m) \ar[r] \ar[d]^{\Phi}& A'(r,n) \ar[d]^{\Phi}\\ X(F_r,\GL_m) \ar[r] & X(F_r,\GL_n)}$$
    commutes, where the bottom horizontal arrow is induced by the standard block diagonal embedding $\GL_m\to\GL_n$.
    \item The following diagram is commutative:
$$\xymatrix{
A'(r,r)  \ar[r]^{\sim} \ar[dr]_\Phi &V(r)\ar[d]^{s\mapsto\rho_s}\\
 &X(\Sigma_r,\GL_r)}$$
\end{enumerate}
\end{proposition}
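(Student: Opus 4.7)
The plan is to establish Part (2) as the main structural assertion; Part (1) then follows as a rank refinement, while Parts (3) and (4) are direct matrix verifications.

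For Part (2), I would invoke the First and Second Fundamental Theorems of classical invariant theory for $\Ort(r)$ acting diagonally on $(\C^r)^r$: the ring of invariants is freely generated by the pairings $(u_i, u_j)$ for $1 \leq i \leq j \leq r$, with no nontrivial relations (the rank relation appearing in the SFT is vacuous when the number of vectors equals the dimension of the ambient space). Restricting to $S(r)^r \subseteq (\C^r)^r$ imposes the $r$ equations $(u_i, u_i) = 1$, so $\C[A'(r,r)]$ is the polynomial ring in the off-diagonal pairings $(u_i, u_j)$ for $1 \leq i < j \leq r$. Since every $r \times r$ symmetric matrix with diagonal entries $2$ arises uniquely as $s + s^T$ for some $s \in V(r)$ (take $s_{ii} = 1$, $s_{ij} = M_{ij}$ for $i < j$, and $s_{ij} = 0$ for $i > j$), the rescaled Gram map $u \mapsto s(u)$ yields the desired isomorphism $A'(r,r) \xrightarrow{\sim} V(r)$.

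For Part (1), applying the same invariant theory argument to $\Ort(m)$ acting on $(\C^m)^r$ for $m < r$ identifies $A'(r,m)$ with the closed subvariety of $V(r) \simeq A'(r,r)$ cut out by vanishing of the $(m+1) \times (m+1)$ minors of $(s+s^T)/2$, i.e., the condition that the Gram matrix have rank at most $m$. This produces the chain of closed immersions $A'(r,1) \hookrightarrow \cdots \hookrightarrow A'(r,r)$. For $m \geq r$, the rank condition is automatic, yielding $A'(r,m) \simeq A'(r,r)$.

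For Parts (3) and (4), direct matrix calculations suffice. For Part (4), on the open locus in $A'(r,r)$ where $u_1, \dots, u_r$ are linearly independent, let $P = [u_1 \mid \cdots \mid u_r]$; then $P^T P = (s+s^T)/2$ for $s = s(u)$, so the reflection matrix $\I - 2 u_i u_i^T = \I - 2 P E_i P^T$ written in the standard basis is conjugate via $P$ to $\I - 2 E_i P^T P = \rho_{s(u)}(\gamma_i)$. Thus the two morphisms $A'(r,r) \to X(F_r, \GL_r)$ agree on a Zariski dense open locus, and by separatedness of the character variety they agree everywhere. Part (3) follows from the observation that for $u \in S(m)^r \subseteq S(n)^r$, the reflections fix the orthogonal complement $\C^{n-m}$ pointwise, so $\rho_u^n = \rho_u^m \oplus \id$, matching the block diagonal embedding $\GL_m \hookrightarrow \GL_n$. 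The main obstacle I anticipate is the degenerate locus in Part (4), where $P$ is not invertible and the two representations need not be conjugate as matrices (only as characters); the separatedness argument, rather than a direct matrix computation, is what allows the identification to extend from the generic locus to all of $A'(r,r)$.
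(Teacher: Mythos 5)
Your proposal is correct and follows essentially the same route as the paper: parts (1) and (2) via the first and second fundamental theorems of orthogonal invariant theory (which the paper delegates to the cited references and to Fan--Whang), part (3) by the block decomposition $\rho_u = \rho_u^m \oplus \mathrm{id}$, and part (4) by checking conjugacy via the change-of-basis matrix $[u_1\cdots u_r]$ on the dense open locus of linearly independent tuples and extending by density/separatedness. Your explicit spelling out of the FFT/SFT step and of the extension argument in (4) just fills in details the paper leaves to references or leaves implicit.
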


\begin{proof}
(1) and (2) follow from the invariant theory of the orthogonal group (\cite[Chapter 11 §2.1]{procesi2} and \cite[Chapter 2 §9]{weyl}). See \cite[Proposition 5.3]{fw} for details. (3) is obvious. To prove (4), it suffices to prove the commutativity on the dense open subset of $A'(r,r)$ parametrizing those $u\in A'(r,r)$ represented by a sequence $(u_1,\dots,u_r)\in S(m)^r$ with $u_1,\dots,u_r$ linearly independent. But in this case, we see that $\rho_{s(u)}$ is conjugate to $\rho_u$ via the change-of-basis matrix $[u_1\cdots u_r]$.
\end{proof}

We now restate and prove Theorem \ref{refthm}.

\begin{thm}{\ref{refthm}}
    Let $F_r=\langle\gamma_1,\dots,\gamma_r\rangle$ be a free group of rank $r$. Then $$L_{r}=\{\gamma_{i_1}\dots \gamma_{i_u}:1\leq i_1<\dots <i_u\leq r\text{ and }1\leq u\leq r\}$$
    is a Burnside set for the class of all semisimple reflective representations of $F_r$.
\end{thm}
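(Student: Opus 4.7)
The plan is to reduce Theorem \ref{refthm} to Corollary \ref{stokesfin} via the moduli-theoretic correspondence between reflective representations and Stokes matrices supplied by Proposition \ref{ref2}. Let $\rho=\rho_u\colon F_r\to\GL_n(\C)$ be a semisimple reflective representation with $\rho(\gamma)$ torsion for every $\gamma\in L_r$. Note at the outset that the elements of $L_r$ are precisely the products $\gamma_I=\gamma_{i_1}\cdots\gamma_{i_{|I|}}$ over nonempty $I\subseteq[r]$, which is the exact index set appearing in Corollary \ref{stokesfin}. We may harmlessly reduce to the case $n\leq r$: if $n>r$, then the $u_i$ span a subspace of dimension at most $r$, and semisimplicity of $\rho_u$ lets us peel off a trivial summand so that the remaining piece is reflective in some dimension $\leq r$.

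The first main step is to attach to $u$ the Stokes matrix $s=s(u)\in V(r)$ afforded by Proposition \ref{ref2}(1)--(2), characterized by $s+s^T=[2(u_i,u_j)]$. The second step is to compare $\rho_u$ with $\rho_s$ using Proposition \ref{ref2}(3)--(4), which provide a commutative diagram
$$
A'(r,n)\hookrightarrow A'(r,r)\xrightarrow{\sim} V(r)\xrightarrow{s\mapsto[\rho_s]}X(F_r,\GL_r)
$$
agreeing with $\Phi\colon A'(r,n)\to X(F_r,\GL_n)$ followed by the pushforward to $X(F_r,\GL_r)$ induced by the block diagonal embedding $\GL_n\hookrightarrow\GL_r$. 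Hence $\rho_s$ and $\rho_u$ (padded with $r-n$ trivial summands) have identical characters on $F_r$; since characters determine semisimplifications and $\rho_u$ is already semisimple, this yields $\rho_s^{\mathrm{ss}}\simeq\rho_u\oplus\mathbf{1}^{\oplus(r-n)}$, where $\mathbf{1}$ denotes the trivial representation.

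The final step transfers the hypothesis: for any nonempty $I\subseteq[r]$ we have $\gamma_I\in L_r$, so $\rho_u(\gamma_I)$ has finite order and its eigenvalues are roots of unity. By the character identification, the eigenvalues of $\rho_s(\gamma_I)$ are those of $\rho_u(\gamma_I)$ together with extra copies of $1$, and are therefore also roots of unity. Inspection of the proof of Corollary \ref{stokesfin} shows that it rests only on the $2\times 2$ case of Lemma \ref{genc}(3) (to deduce that the entries $x_{ij}$ are algebraic integers) and on Proposition \ref{elliptic} applied at each archimedean place, and so it uses only this weaker eigenvalue information rather than the stated finite-order hypothesis on each $\rho_s(\gamma_I)$. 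Accordingly, we conclude that $\rho_s^{\mathrm{ss}}$ has finite image, and since $\rho_u$ is a direct summand of $\rho_s^{\mathrm{ss}}$, it has finite image as well.

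The main obstacle is the mismatch in semisimplicity: $\rho_s$ may fail to be semisimple even though $\rho_u=\rho$ is, so one cannot directly transfer the finite-order property of $\rho_u(\gamma_I)$ to a finite-order property of $\rho_s(\gamma_I)$ and cite Corollary \ref{stokesfin} verbatim. The resolution is to work throughout at the level of characters and eigenvalues, where the correspondence of Proposition \ref{ref2} is exact, and to observe that the argument of Corollary \ref{stokesfin} is itself eigenvalue-driven, so the roots-of-unity condition inherited from $\rho_u$ is enough to drive it.
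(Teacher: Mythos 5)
Your proposal follows essentially the same route as the paper: pass from the reflective data $u$ to the Stokes matrix $s$ via Proposition \ref{ref2}, identify $\rho$ (up to trivial summands) with the semisimplification of $\rho_s$, and conclude by Corollary \ref{stokesfin}. Your explicit remark that the hypothesis only transfers to $\rho_s(\gamma_I)$ as a roots-of-unity (quasiunipotency) condition rather than literal finite order, and that the proof of Corollary \ref{stokesfin} is eigenvalue-driven and needs nothing more, addresses a point the paper passes over by citing the corollary directly, so your write-up is if anything slightly more careful.
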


\begin{proof}
    Let $\rho:F_r\to\GL_n(\C)$ be a semisimple reflective representation, with associated vectors $u_1,\dots,u_r\in S(n)$. By Proposition \ref{ref2}, there exist $u_1',\dots,u_r'\in S(r)$ such that $[u_1,\dots,u_r]=[u_1',\dots,u_r']$ in $A'(r,r)$. Let $s\in V(r)(\C)$ be the unique Stokes matrix such that $s+s^T=[2(u_i',u_j')]$.
    Let $m=\max\{r,n\}$. The class of $\rho_s$ in $X(F_r,\GL_m)$ is the same as the class of $\rho$. Since $\rho$ is semisimple, it follows that $\rho$ is isomorphic to the semisimplification of $\rho_s$. Theorem \ref{refthm} then follows from Corollary \ref{stokesfin}.
\end{proof}

\subsection{Exceptional isomorphism} \label{sect:4.2}
In \cite{fw}, an exceptional isomorphism between the moduli space of points $A(r+1,4)$ defined in Section \ref{sect:4} and the characer variety $X(F_r,\SL_2)=\SL_2^r\git\SL_2$ was established. We will make use of this isomorphism in the derivation of Theorem \ref{mainthm2} from Theorem \ref{refthm}.

Let $V=\Mat_2$ be the space of $2\times 2$ matrices with quadratic from $q=\det$ given by the determinant. Let $S(q)=\{v\in V:q(v)=1\}=\SL_2$. The Clifford algebra $\Cl(q)$ associated to $(V,q)$ can be identified with the $\Z/2$-graded algebra
$$M=M^0\oplus M^1=\Mat_2^2\oplus\Mat_2^2\iota$$
where $\iota(a,b)=(b,a)\iota$ for all $(a,b)\in\Mat_2^2$. We have an embedding $j:V\to M^1$ given by $j(x)=(x,\bar x)\iota$ where $\bar x$ denotes the adjugate of $x$, namely,
$$\text{if $x=\begin{bmatrix}x_{11} & x_{12}\\ x_{21} & x_{22}\end{bmatrix}$ then $\bar x=\begin{bmatrix}x_{22} &-x_{12} \\-x_{21} & x_{11}\end{bmatrix}$}.$$
Under the above identification, the spin group $\Spin(q)=\langle j(S(q))\rangle\cap M^0$ is identified with $\SL_2^2$. The conjugation action of $\Spin(q)$ on $j(V)$ factors through the surjection $\pi:\Spin(q)\to\SO(4)$, and note that we have
$$(a,b)\cdot (u,u^{-1})\iota=(a,b)(u,u^{-1})\iota (a^{-1},b^{-1})=(aub^{-1},(aub^{-1})^{-1})\iota$$
for every $(a,b)\in\SL_2^2$ and $u\in\SL_2$. Thus, we have
$$S(q)^{r+1}\git\SO(q)=S(q)^{r+1}\git\Spin(q)=\SL_2^{r+1}\git\SL_2^2$$
where the action of $\SL_2^2$ on $\SL_2^{r+1}$ is given by
$$(a,b)\cdot(u_0,\dots,u_r)=(au_0b^{-1},\dots,au_rb^{-1})$$
for every $(a,b)\in\SL_2^2$ and $(u_0,\dots,u_r)\in\SL_2^{r+1}$. 
In light of this and the fact that the quadratic spaces $(\C^4,x_1^2+\dots +x_4^2)$ and $(V,q)$ are isomorphic over $\C$, we obtain the exceptional isomorphism
\begin{align*}
    A(r+1,4)&=S(4)^{r+1}\git\SO(4)=S(4)^{r+1}\git\Spin(4)\simeq S(q)^{r+1}\git\Spin(q)\\
    &\simeq \SL_2^{r+1}\git\SL_2^2\to\SL_2^r\git\SL_2=X(F_r,\SL_2)
\end{align*}
where the arrow $\SL_2^{r+1}\git\SL_2^2\to\SL_2^r\git\SL_2$ is given by
    $$[u_0,\dots,u_r]\mapsto[u_0 u_1^{-1},\dots,u_{r-1}u_{r}^{-1}].$$
This is the isomorphism described in \cite{fw}.

We now give an alternate description of the isomorphism $A(r+1,4)\simeq X(F_r,\SL_2)$ defined above. Let
$W_r=\langle\delta_0,\dots,\delta_r:\delta_0^2,\dots,\delta_r^2\rangle$
be the universal Coxeter group of rank $r+1$. The free group $F_r$ of rank $r$ embeds as the subgroup of $W_r$ generated by the elements $\gamma_1,\dots,\gamma_r$ where $\gamma_i=\delta_{i-1}\delta_i$ for $i=1,\dots,r$. Given $u=(u_0,\dots,u_r)\in S(q)^{r+1}$, let us consider the group homomorphism
$\rho_u:W_r\to\Pin(q)$
given by $$\rho_u(\delta_i)=j(u_i)=(u_i,u_i^{-1})\iota$$ for $i=0,\dots,r$. Note that we have
\begin{align*}
\rho(\gamma_i)&=\rho(\delta_{i-1}\delta_i)=\rho(\delta_{i-1})\rho(\delta_i)\\
&=(u_{i-1},u_{i-1}^{-1})\iota(u_{i},u_{i}^{-1})\iota=(u_{i-1},u_{i-1}^{-1})(u_i^{-1},u_i)\\
&=(u_{i-1}u_i^{-1},(u_{i-1}u_i^{-1})^{-1})
\end{align*}
for $i=1,\dots,r$. Thus, the restriction
$$\rho_u|_{F_r}:F_r\to\Spin(q)=\SL_2\times\SL_2$$
gives rise to a pair $\rho_u',\rho_u'':F_r\rightrightarrows\SL_2$ of representations by two projections to $\SL_2$, where concretely $\rho_u'(\gamma_i)=u_{i-1}u_i^{-1}$ and $\rho_u''(\gamma_i)=(u_{i-1}u_i^{-1})^{-1}$ for $i=1,\dots,r$. The isomorphism
$$A(r+1,4)\simeq A(r+1,q)\simeq X(F_r,\SL_2)$$
is induced by the assignment $u\mapsto\rho_u'$.

\begin{remark}
    We can describe the assignment in the inverse direction as follows. Given a representation $\rho':F_r\to\SL_2(\C)$, we define
    $$u=(u_0,\dots,u_r)\in S(q)^{r+1}=\SL_2^{r+1}$$
    by setting $u_0=\mathbb I$ and inductively letting    
    $ u_{i}=\rho(\gamma_i)^{-1}u_{i-1}$ for each $i=1,\dots,r$. The class of $u$ in $S(q)^{r+1}\git\SO(q)$ is well-defined and depends only on the class of $\rho'$ in $X(F_r,\SL_2)$, and provides the inverse map $X(F_r,\SL_2)\to A(r+1,q)\simeq A(r+1,4)$.
\end{remark}

\subsection{Degree $2$ representations}\label{sect:4.4}
We now discuss the derivation of Theorem \ref{mainthm2} from Theorem \ref{refthm}. First, we record a basic lemma on Burnside sets.

\begin{lemma}
\label{finlem}
    Let $F$ be a free group of finite rank. Let $G$ and $H$ be groups.
    \begin{enumerate}
        \item If $G\to H$ is a surjective morphism with finite kernel, a Burnside set for $\Hom(F,H)$ is a Burnside set for $\Hom(F,G)$ and vice versa.
        \item A Burnside set for the union $\Hom(F,G)\cup\Hom(F,H)$ is a Burnside set for $\Hom(F,G\times H)$, and vice versa.
    \end{enumerate}
\end{lemma}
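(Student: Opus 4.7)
The plan is to verify each biconditional directly from Definition \ref{burnside}, focusing on the nontrivial implication that torsion of the images of $L$ forces finite image; the reverse implication (finiteness of the image gives torsion of every element) is automatic and I will use it tacitly.

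For part (1), let $\pi: G \to H$ be surjective with finite kernel $K$. The key observation I would extract is that an element $g \in G$ is torsion if and only if $\pi(g)$ is torsion (if $\pi(g)^n = 1$ then $g^n \in K$, so $g^{n|K|} = 1$), and that a subgroup $\Gamma \leq G$ is finite if and only if $\pi(\Gamma)$ is, since $\Gamma$ is then covered by finitely many $K$-cosets. To transfer a Burnside set $L$ from $\Hom(F,H)$ to $\Hom(F,G)$, I will take any $\rho: F \to G$, form $\pi \circ \rho$, and pass torsion and finiteness back and forth using these observations. For the reverse transfer, I will use freeness of $F$: any $\rho': F \to H$ lifts to some $\rho: F \to G$ with $\pi \circ \rho = \rho'$ by choosing preimages of the free generators, after which the same observations apply.

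For part (2), any $\rho: F \to G \times H$ decomposes via the two projections as $\rho = (\rho_G, \rho_H)$. Since $\rho(\gamma)$ is torsion if and only if both $\rho_G(\gamma)$ and $\rho_H(\gamma)$ are, and $\rho(F)$ is finite if and only if both $\rho_G(F)$ and $\rho_H(F)$ are, the Burnside property for $\rho$ follows immediately from the Burnside property applied to $\rho_G \in \Hom(F,G)$ and $\rho_H \in \Hom(F,H)$ separately. For the reverse direction, a representation $\rho: F \to G$ can be promoted to $(\rho, \mathbf{1}_H): F \to G \times H$, which preserves both the orders of elements and the finiteness of the image; the same trick works for $\rho: F \to H$.

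The lemma presents no substantive obstacle, being essentially bookkeeping with the definition. The only points requiring a moment's care are the torsion-lifting step in part (1), which depends crucially on finiteness of $K$, and the existence of a group-theoretic lift of $\rho'$ to $\rho$, which depends crucially on freeness of $F$.
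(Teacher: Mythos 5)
Your proof is correct and follows essentially the same route as the paper: push forward along $G\to H$, lift $F\to H$ to $F\to G$ using freeness, and decompose maps to $G\times H$ componentwise. You merely spell out more explicitly the torsion-lifting via finiteness of the kernel and the promotion $\rho\mapsto(\rho,\mathbf{1}_H)$, which the paper leaves implicit.
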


\begin{proof}
    (1) A Burnside set for $\Hom(F,H)$ is a Burnside set for $\Hom(F,G)$, since a morphism $F\to G$ has finite image if and only if the composition $F\to G\to H$ has finite image. A Burnside set for $\Hom(F,G)$ is a Burnside set for $\Hom(F,G)$, since a morphism $F\to H$ lifts to a morphism $F\to G$ which has finite image if and only if the original morphism has finite image. (2) This follows from the fact that a morphism $F\to G\times H$ has finite image if and only if each of its components $F\to G$ and $F\to H$ has finite image.
\end{proof}

\begin{thm}{\ref{mainthm2}}
Let $F$ be a group generated by a finite set $S$. The set of elements of word length $\leq3|S|$ in $S$ is a Burnside set for $\Hom(F,\GL_2(\C))$.
\end{thm}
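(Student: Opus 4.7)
The plan is to reduce Theorem \ref{mainthm2} to the $\SL_2$ case via Lemma \ref{finlem} and then use the exceptional correspondence of Section \ref{sect:4.2} to convert the problem into one about semisimple reflective representations of degree $4$, where Theorem \ref{refthm} applies.

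First I would reduce to the $\SL_2(\C)$ case. One may assume $F = F_r$ is free of rank $r = |S|$. The map $\SL_2(\C) \times \C^\times \to \GL_2(\C)$, $(g, \lambda) \mapsto \lambda g$, is surjective with kernel of order two, so by Lemma \ref{finlem} it suffices to produce Burnside sets for $\Hom(F_r, \SL_2(\C))$ and $\Hom(F_r, \C^\times)$ separately. The abelian case is trivial. The main task is, for semisimple $\rho: F_r \to \SL_2(\C)$, to show that torsion of $\rho(w)$ for all words $w$ of length $\leq 3r$ forces $\rho$ to be finite.

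Given such $\rho$, I set $u_0 = \mathbb{I}$ and $u_k = \rho(\gamma_1 \cdots \gamma_k)^{-1}$, so that $u = (u_0, \ldots, u_r) \in S(q)^{r+1}$ yields a reflective representation $\rho_u : F_{r+1} = \langle \delta_0, \ldots, \delta_r\rangle \to \Ort(4)$ sending $\delta_i$ to reflection by $u_i$. By Section \ref{sect:4.2}, $\rho$ has finite image if and only if $\rho_u$ does. To apply Theorem \ref{refthm}, $\rho_u$ must be semisimple: via the $\Spin$-lift to $\Spin(q) = \SL_2 \times \SL_2$, $\rho_u|_{F_r}$ is isomorphic to $\rho \otimes \rho''$ as $F_r$-representations, where $\rho''(\gamma_i) = u_{i-1}^{-1} u_i$. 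The subgroups $\rho(F_r)$ and $\rho''(F_r)$ of $\SL_2(\C)$ both equal $\langle u_1, \ldots, u_r\rangle$ (since $u_k = (\rho(\gamma_1) \cdots \rho(\gamma_k))^{-1}$ lies in $\rho(F_r)$ while $\rho(\gamma_i) = u_{i-1} u_i^{-1}$ lies in $\langle u_i\rangle$); as semisimplicity of an $\SL_2$-representation depends only on its image, $\rho''$ is semisimple. The tensor product is then semisimple in characteristic zero, and semisimplicity descends from the index-$2$ subgroup $F_r$ to $F_{r+1}$.

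The central step is to translate torsion of $\rho_u(\delta_{i_1} \cdots \delta_{i_k}) \in \Ort(4)$, for $\delta_{i_1}\cdots\delta_{i_k} \in L_{r+1}$, into torsion of $\rho$ on short words. The case $k = 1$ is automatic. For $k \geq 2$, one lifts to $j(u_{i_1}) \cdots j(u_{i_k}) \in \Pin(q)$ and simplifies via the Clifford relation $\iota(a,b) = (b,a)\iota$: for $k = 2m$ even the lift equals $(A_k, B_k) \in \Spin(q)$ with $A_k = u_{i_1}u_{i_2}^{-1} \cdots u_{i_{2m-1}}u_{i_{2m}}^{-1}$ and $B_k = u_{i_1}^{-1}u_{i_2} \cdots u_{i_{2m-1}}^{-1}u_{i_{2m}}$; for $k = 2m+1$ odd the square of the lift lies in $\Spin(q)$ with first coordinate $A_k B_k$. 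Since $\Pin \to \Ort$ has finite kernel, finite order of $\rho_u(\delta_{i_1}\cdots\delta_{i_k})$ is equivalent to torsion of both $A_k$ and $B_k$ (even case) or of $A_k B_k$ (odd case). The identity $u_a u_b^{-1} = \rho(\gamma_{a+1}\cdots\gamma_b)$ for $a < b$ (from free-group cancellation in $\rho(\gamma_1\cdots\gamma_a)^{-1}\rho(\gamma_1\cdots\gamma_b)$) lets one pair factors; the strict ordering $i_1 < \cdots < i_k$ ensures no cancellations at the joins, so $A_k$, $B_k$, and $A_k B_k$ become $\rho$ of explicit words of length $\leq r$, $\leq 2r$, and $\leq 2r$ respectively, all within $3r$. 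The main obstacle is this combinatorial bookkeeping: the strict orderings are essential to keep word lengths bounded by preventing cancellations at the joins between paired factors.
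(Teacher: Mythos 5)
Your semisimple branch follows the paper's route (reduce to $\SL_2$, pass through the exceptional correspondence of Section \ref{sect:4.2}, and invoke Theorem \ref{refthm} for the associated reflective representation), and it is done carefully: the verification that the reflective representation is itself semisimple (equal images of $\rho$ and $\rho''$, semisimplicity of tensor products in characteristic zero, descent from the even subgroup), and the even/odd Clifford bookkeeping expressing $A_k$, $B_k$, $A_kB_k$ as $\rho$ of words of length $\leq 2r$, are correct and in fact supply details that the paper compresses (the paper instead appeals to ``Theorem \ref{refthm}, or rather its proof,'' i.e.\ Corollary \ref{stokesfin}, to avoid discussing semisimplicity of $\bar\rho_u$, and is terser about which words in the $\gamma_i$ are needed). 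The minor imprecision that $F_r$ has index $2$ in $W_r$ rather than in the free group on $\delta_0,\dots,\delta_r$ is harmless, since $\rho_u$ factors through $W_r$ and semisimplicity depends only on the image.

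However, there is a genuine gap: you never treat non-semisimple representations. Definition \ref{burnside} requires the implication ``torsion on the set $\Rightarrow$ finite image'' for \emph{every} $\rho\in\Hom(F,\GL_2(\C))$, and after your reduction this includes reducible, non-diagonalizable representations $F_r\to\SL_2(\C)$. Your argument only controls such a $\rho$ through its semisimplification (the correspondence and Theorem \ref{refthm} see only trace/conjugation data), and finiteness of the semisimplification does not imply finiteness of $\rho$: one must rule out a nontrivial unipotent part. The paper closes exactly this case with a separate, short argument: if $\rho$ is reducible then $\rho([F_r,F_r])$ consists of unipotent elements, so torsion of the commutators $\rho([\gamma_i,\gamma_j])$ (words of length $4\leq 3r$ when $r\geq2$; the case $r=1$ is trivial) forces them to be the identity, the image is then abelian, and finiteness follows from torsion of the generators. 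You need to add this reducible-case argument (or an equivalent one) for the statement as claimed, since the Burnside set must serve all of $\Hom(F,\GL_2(\C))$, not only its semisimple locus.
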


\begin{proof}
It suffices to treat the case where $F=F_r$ is a free group on $r$ generators $\gamma_1,\dots,\gamma_r$. We will explicitly construct a finite set of words in $S=\{\gamma_1,\dots,\gamma_r\}$ that provides a Burnside set for $\Hom(F_r,\GL_2(\C))$. First, note that the morphisms $$\SL_2(\C)\xrightarrow{\pr}\PSL_2(\C)\quad\text{and}\quad\GL_2(\C)\xrightarrow{(\det,\pr)}\C^\times\times\PSL_2(\C)$$ are surjective with finite fibers. In light of Lemma \ref{finlem}, we see that if $L\subseteq F_r$ is a finite Burnside set for $\Hom(F,\SL_2(\C))$ that contains $\{\gamma_1,\dots,\gamma_r\}$, then $L$ is a Burnside set for $\Hom(F,\GL_2(\C))$. It therefore suffices to construct a Burnside set for $\Hom(F,\SL_2(\C))$. We divide our analysis into the loci
$$\Hom(F,\SL_2(\C))=\Hom(F,\SL_2(\C))^{\red}\cup\Hom(F,\SL_2(\C))^{\textup{ss}}$$
of reducible representations and of semisimple representations. We first claim that the set
$$L'=\{\gamma_1,\dots,\gamma_r\}\cup\{[\gamma_i,\gamma_{j}]:1\leq i <j\leq r\}$$
is a Burnside set for $\Hom(F,\SL_2(\C))^{\red}$. Indeed, suppose that $\rho:F_r\to \SL_2(\C)$ is reducible and $\rho(\gamma)$ has finite order for every $\gamma\in L'$. By reducibility of $\rho$, we see that $\rho([F_r,F_r])$ is unipotent. It follows that $\rho([\gamma_i,\gamma_{j}])$, being of finite order, must be the identity matrix for each $1\leq i<j\leq r$. It follows that the image of $\rho$ is abelian, and hence finite since $\gamma_1,\dots,\gamma_r\in L'$. Note that each $\gamma\in L'$ has word length $\leq 4$ (and $L'=\{\gamma_1\}$ if $r=1$).

It remains to find a finite Burnside set for the locus of semisimple representations $\Hom(F_r,\SL_2(\C))^{\textup{ss}}$. Given $\rho\in\Hom(F_r,\SL_2(\C))^{\textup{ss}}$, let $u\in S(q)^{r+1}$ be a sequence of vectors such that $\rho=\rho_u'$ using the notation of Section \ref{sect:4.2}. Then we see that $\rho$ has finite image if and only if $\rho_u:W_r\to\Pin(q)$ has finite image. Since the map $\Pin(q)\to\Ort(q,\C)\simeq\Ort(4,\C)$ is surjective with finite kernel (the latter may be verified by explicit computation), we conclude by Lemma \ref{finlem} that $\rho_u$ has finite image if and only if the composition $\bar\rho_u:W_r\to \Ort_n(\C)$ has finite semisimplification. But $\bar\rho_u$ is a reflective representation, and by Theorem \ref{refthm} (or rather its proof) it has finite semisimplification if and only if $\rho_u(\gamma)$ has finite order for all $\gamma\in L'''$, where $$L'''=\{\delta_{i_1}\cdots\delta_{i_u}:0\leq i_1<\dots <i_u\leq r\text{ and }1\leq u\leq r+1\}.$$ The last condition is satisfied if and only if $\rho$ has finite order for every $\gamma\in L''$, where $L''=\{\delta^2:\delta\in L'''\}\subseteq F_r$. Finally, note that every $\gamma\in L''$ has word length at most $3r$ in $\gamma_1,\dots,\gamma_r$.
\end{proof}

\begin{corollary}
\label{surfcor}
    Let $\Sigma$ be a surface of genus $g>0$ with $n\leq2$ punctures. There is an explicit finite Burnside set $L$ consisting of simple loops on $\Sigma$ for the class of all semisimple representations $\pi_1(\Sigma)\to\GL_2(\C)$.
\end{corollary}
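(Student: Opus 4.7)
The plan is to realize $\Sigma$ as a branched double cover of the complex plane $\C$, identify $\pi_1(\Sigma)$ as an index-$2$ subgroup of a universal Coxeter group, and then combine the argument in the proof of Theorem \ref{mainthm2} with a geometric interpretation of its Burnside-set elements as lifts of arcs.

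First consider $\Sigma = \Sigma_{g,n}$ with $n \in \{1, 2\}$, so $\pi_1(\Sigma)$ is free of rank $r = 2g+n-1$. Realize $\Sigma$ as the double cover $\pi : \Sigma \to \C$ branched at $r+1 = 2g+n$ distinct marked points $p_0, \dots, p_r$, with the punctures of $\Sigma$ sitting above $\infty$; a Riemann--Hurwitz count confirms the topological type. Viewing the $p_i$ as order-$2$ cone points on $\C$, this identifies $\pi_1(\Sigma)$ with the index-$2$ subgroup $F_r$ of the orbifold fundamental group $W_r = \langle \delta_0, \dots, \delta_r : \delta_0^2, \dots, \delta_r^2\rangle$, compatibly with the presentation $\gamma_i = \delta_{i-1}\delta_i$ from Section \ref{sect:4.2}.

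Next follow the proof of Theorem \ref{mainthm2}. For a semisimple representation $\rho : \pi_1(\Sigma) \to \SL_2(\C)$, the exceptional correspondence and (the proof of) Theorem \ref{refthm} show that finiteness of $\rho$ is detected by the finite set
\[
L'' = \{\delta^2 : \delta \in L'''\} \subseteq F_r, \qquad L''' = \{\delta_{i_1} \cdots \delta_{i_u} : 0 \leq i_1 < \dots < i_u \leq r,\ 1 \leq u \leq r+1\},
\]
augmented by the generators $\gamma_1, \dots, \gamma_r$ to handle the semisimple reducible case (these are direct sums of two characters, for which torsion on generators suffices). The key geometric observation is that every element of $L''$ is freely homotopic to a simple loop on $\Sigma$: if $\alpha \subseteq \C$ is an embedded arc passing through $p_{i_1}, \dots, p_{i_u}$ in order, then the preimage $\pi^{-1}(\alpha)$ is a simple closed curve on $\Sigma$ representing the conjugacy class of $(\delta_{i_1} \cdots \delta_{i_u})^2 \in F_r$. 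Each generator $\gamma_i = \delta_{i-1}\delta_i$ is similarly represented by a simple loop lifted from an arc between two adjacent branch points. Finally, the reduction from $\GL_2(\C)$-representations to $\SL_2(\C)$-representations via Lemma \ref{finlem} (as in the proof of Theorem \ref{mainthm2}) yields the explicit Burnside set of simple loops on $\Sigma$.

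For closed $\Sigma = \Sigma_g$ (the case $n = 0$), puncture at a single point to form $\Sigma_{g,1}$, apply the case just treated to produce a Burnside set $L$ of simple loops on $\Sigma_{g,1}$, and take the image in $\pi_1(\Sigma_g)$ under the surjection induced by inclusion. A simple loop on $\Sigma_{g,1}$ remains a simple loop on $\Sigma_g$, and a semisimple representation of $\pi_1(\Sigma_g)$ has finite image if and only if its pullback to $\pi_1(\Sigma_{g,1})$ does, so the image of $L$ provides the desired Burnside set. The step demanding the most care is the topological verification that preimages of embedded arcs through branch points give simple closed curves in the double cover and that these curves realize the correct conjugacy classes of squared Coxeter words; a concrete choice (e.g.~placing the $p_i$ along the real axis with $\alpha$ a segment of the real line) should make this tractable, but it is the main substantive step behind the geometric interpretation.
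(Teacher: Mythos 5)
Your overall strategy is exactly the paper's: realize $\Sigma$ (after reducing the closed case by puncturing and pulling back, which the paper does implicitly with its ``without loss of generality'') as the hyperelliptic double cover of a genus-zero orbifold with $r+1$ order-$2$ cone points, identify $\pi_1(\Sigma)=F_r$ inside $W_r$ via $\gamma_i=\delta_{i-1}\delta_i$, and import the sets $L'''$ and $L''=\{\delta^2:\delta\in L'''\}$ from the proof of Theorem \ref{mainthm2} together with the Lemma \ref{finlem} reduction from $\GL_2$ to $\SL_2$. However, your key geometric claim is wrong as stated, in two ways. First, the set-theoretic preimage $\pi^{-1}(\alpha)$ of an embedded arc that passes through a branch point in its interior is not a $1$-manifold: in the local model $w=z^2$ the preimage of a diameter through the branch point is a $4$-valent cross, so for $u\geq3$ (and already for your ``real segment'' model $y^2=\prod(x-p_i)$ with $\alpha$ spanning several $p_i$) you get a chain of circles wedged at the intermediate Weierstrass points, not a simple closed curve. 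The correct object to lift is a simple loop in the base enclosing exactly $p_{i_1},\dots,p_{i_u}$ (for instance the boundary of a regular neighborhood of your arc), which represents the conjugacy class of $\delta_{i_1}\cdots\delta_{i_u}$ in $W_r$.

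Second, and more substantively, parity matters: when $u$ is even the word $\delta=\delta_{i_1}\cdots\delta_{i_u}$ already lies in $F_r$, the loop downstairs lifts to two disjoint simple closed curves each representing $\delta$, and $\delta^2$ is then the \emph{square} of a simple loop; on an orientable surface a proper power of a nontrivial element is never freely homotopic to a simple closed curve (e.g.\ $\gamma_i^2\in L''$ is not a simple loop). So your assertion that every element of $L''$ is freely homotopic to a simple loop is false, and the set you exhibit does not consist of simple loops, which is the whole content of the corollary. This is precisely why the paper's proof says $L''$ consists of ``simple loops or their squares.'' The repair is easy and is what the paper intends: only when $u$ is odd does the preimage double-cover the base loop and represent $\delta^2$; for even $u$ replace $\delta^2$ by $\delta$ itself. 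Since torsion of $\rho(\delta)$ implies torsion of $\rho(\delta^2)$, the adjusted set (together with the generators, which are themselves simple loops) is still a Burnside set, and with this parity correction your argument coincides with the paper's proof.
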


\begin{proof}
    We may assume without loss of generality that $\Sigma$ has at least one puncture. Let $r=2g+n-1$, so that $\pi_1(\Sigma)\simeq F_r$. Note that $\Sigma$ is a hyperelliptic double covering of the orbifold surface $\bar\Sigma$ of genus $0$ with $r+1=2g+n$ orbitfold points of index $2$ and one puncture. The orbifold fundamental group of $\bar\Sigma$ is $$W_r=\langle \delta_0,\dots,\delta_r|\delta_0^2,\dots,\delta_r^2\rangle$$
    in which $\pi_1(\Sigma)=F_r$ sits as the index $2$ subgroup generated by $\gamma_1,\dots,\gamma_r$ where we define $\gamma_i=\delta_{i-1}\delta_i$ for $i=1,\dots,r$. Since the set $L'''$ considered in the proof of Theorem \ref{refthm} above correspond to simple loops on $\bar\Sigma$, we see that the set $L''=\{\delta^2:\delta\in L'''\}\subseteq\pi_1(\Sigma)$ consists of simple loops or their squares on the double cover $\Sigma$. Since $L''$ thus constructed is a Burnside set for the semisimple representations $F_r\to\GL_2(\C)$, the claim follows.
\end{proof}

\begin{appendix}

\end{appendix}


\begin{thebibliography}{1}

\bibitem{bass}
Bass, H.
\emph{Groups of integral representation type.}
Pacific J. Math. 86(1): 15-51 (1980).

\bibitem{bgmw}
Biswas, I., Gupta, S., Mj, M., Whang, J.P.
\emph{Surface group representations in $\SL_2(\C)$ with finite mapping class orbits.}
Geom. Topol. 26(2): 679-719 (2022). DOI: 10.2140/gt.2022.26.679

\bibitem{bg}
Breuillard, E., Gelander, T.
\emph{Uniform independence in linear groups.}
Invent. Math. 173(2008), no.2, 225-263.

\bibitem{crrz}
Corvaja, P., Rapinchuk, A., Ren, J., Zannier, U.
\emph{Non-virtually abelian anisotropic linear groups are not boundedly generated.}
Invent. math. (2022) 227: 1-26.


\bibitem{cr}
 Curtis, C.; Reiner, I.
\emph{Representation Theory of Finite Groups and Associative Algebras.} John Wiley \& Sons. (1962)

\bibitem{dubrovin}
Dubrovin, B.
\emph{Painlev\'e transcendents in two-dimensional topological field theory}, 
in: The Painlevé Property, in:
CRM Ser. Math. Phys., Springer-Verlag, New York, 1999, pp. 287–412

\bibitem{fw}
Fan, Y., Whang, J.P.
\emph{Stokes matrices and exceptional isomorhpisms.}
Submitted. Preprint found at https://www.math.snu.ac.kr/\(\sim \)jwhang/

\bibitem{gromov}
Gromov, M.
\emph{Filling Riemannian manifolds.}
J. Differential Geom. 18(1): 1-147 (1983). DOI: 10.4310/jdg/1214509283

\bibitem{katz}
Katz, N.
\emph{A conjecture in the arithmetic theory of differential equations.}
Bull. Soc. Math. France. 110 (2): 203–239. (1982)

\bibitem{ks}
Koberda, T., Santharoubane, S.
\emph{Quotients of surface groups and homology of finite covers via quantum representations.}
Inv. Math. 206, 269-292 (2016)

\bibitem{laurent}
Laurent, M.
\emph{Equations diophantine exponentielles.}
Invent. Math. 78 (1984), 299-327.

\bibitem{procesi}
Procesi, C.
\emph{The invariant theory of $n\times n$ matrices.}
Adv. Math. 19(3), 306-381 (1976)

\bibitem{procesi2}
Procesi, C.
\emph{Lie groups: an approach through invariants and representations.}
Springer, New York, 2007

\bibitem{s}
Sarnak, P.
\emph{Torsion points on varieties and homology of abelian covers}.
Unpublished manuscript. (1988). http://publications.ias.edu/sites/default/files/torsion-points-sarnak.pdf

\bibitem{sa}
Sarnak P., Adams S.
\emph{Betti numbers of congruence groups (with an appendix
by Zeev Rudnick)},
Israel J. Math., vol. 88 (1994), pp. 31-72.

\bibitem{schur}
Schur, I. \emph{\:Uber Gruppen linearer Substitutionen mit Koeffizienten aus einem
algebraischen Zahlk\"orper}.
Math. Ann. 71 (1911), no. 3, 355-367.

\bibitem{psw}
Patel, A., Shankar, A., Whang, J.P.
\emph{The rank two p-curvature conjecture on generic curves.}
Adv. Math. 386 (2021).

\bibitem{shankar}
Shankar, A.
\emph{The p-curvature conjecture and monodromy around simple closed loops.}
Duke Math. J. 167(10): 1951-1980 (2018)

\bibitem{weyl}
H. Weyl.
\emph{The Classical Groups. Their Invariants and Representations.}
Princeton University Press, Princeton, N.J., 1939

\end{thebibliography}
\end{document}